\documentclass[11pt,reqno,twoside]{amsart}
\usepackage{graphicx}
\usepackage{amssymb}
\usepackage{epstopdf}
\usepackage[asymmetric,top=3.5cm,bottom=3.5cm,left=3.1cm,right=3.1cm]{geometry}
\geometry{a4paper}

\usepackage{booktabs} 
\usepackage{array} 
\usepackage{paralist} 
\usepackage{verbatim} 
\usepackage{subfig} 
\usepackage{tabularx}
\usepackage{amsmath,amsfonts,amsthm,mathrsfs,amssymb,cite}
\usepackage[usenames]{color}
\usepackage{bm}

\newtheorem{thm}{Theorem}[section]

\newtheorem{lem}{Lemma}[section]

\theoremstyle{definition}

\theoremstyle{remark}

\newtheorem{rem}{Remark}[section]
\numberwithin{equation}{section}


\title[Plasmonic resonance and cloaking beyond the quasistatic limit]{On anomalous localized resonance and plasmonic cloaking beyond the quasistatic limit}

\author{Hongjie Li}
\address{Department of Mathematics, Hong Kong Baptist University, Kowloon Tong, Hong Kong SAR.}
\email{hongjie$_{-}$li@yeah.net}

\author{Hongyu Liu}
\address{Department of Mathematics, Hong Kong Baptist University, Kowloon Tong, Hong Kong SAR.}
\email{hongyu.liuip@gmail.com; hongyuliu@hkbu.edu.hk}

\begin{document}
\maketitle

\begin{abstract}

In this paper, we give the mathematical construction of novel core-shell plasmonic structures that can induce anomalous localized resonance and invisibility cloaking at certain finite frequencies beyond the quasistatic limit. The crucial ingredient in our study is that the plasmon constant and the loss parameter are constructed in a delicate way that are correlated and depend on the source and the size of the plasmonic structure. As a significant byproduct of this study, we also derive the complete spectrum of the Neumann-Poinc\'are operator associated to the Helmholtz equation with finite frequencies in the radial geometry. The spectral result is the first one in its type and is of significant mathematical interest for its own sake.

\medskip

\medskip

\noindent{\bf Keywords:}~~anomalous localized resonance, plasmonic material,  core-shell structure, beyond quasistatic limit, Neumann-Poinc\'are operator, spectral

\noindent{\bf 2010 Mathematics Subject Classification:}~~35R30, 35B30, 35Q60, 47G40

\end{abstract}

\section{Introduction}

\subsection{Background and motivation}

Recently, there is significant interest on the mathematical study of plasmon materials in understanding their peculiar and distinctive behaviours and the potential striking applications. Plasmon materials are a type of metamaterial that are artificially engineered to allow the presence of negative material parameters, including negative permittivity and permeability in electromagnetism \cite{Ack13,Ack14,ARYZ,Bos10,Brl07,CKKL,Klsap,LLL,GWM1,GWM2,GWM3,GWM4,GWM6,GWM7,GWM8,GWM9,Pen1,Pen2,Ves}, negative density and refractive index in acoustics \cite{ADM,AMRZ,AKL,KLO}, and negative Lam\'e parameters in linear elasticity \cite{AKKY,AKKY2,AKM1,AKM2,DLL,LiLiu2d,LiLiu3d,LLL2,KM,LLBW}.

Among the various plasmonic phenomena, we are particularly interested in the anomalous localized resonance (ALR) and its associated cloaking effect that were first discovered by Milton and Nicorovici in \cite{GWM3}. Mathematically, the plasmon resonance is associated to the infinite dimensional kernel of a certain non-elliptic partial differential operator (PDO). In fact, the presence of negative material parameters breaks the ellipticity of the underlying partial differential equations (PDEs) that govern the various physical phenomena. Consequently, the non-elliptic PDO may possess a nontrivial kernel, which in turn may induce various resonance phenomena due to appropriate external excitations. The anomalous localized resonance is particularly delicate and intriguing. It demonstrates highly oscillating behaviour that is manifested by the energy blowup. Moreover, the resonance is localized in the sense that the resonant field is confined to a bounded region with a sharp boundary not defined by the discontinuity of the material parameters, and outside that region, the resonant field converges to a smooth one. The resonance strongly depends not only on the form of the external source, but also on the location of the source. If ALR occurs, one can show that for a certain external excitation, both the source and the plasmonic structure are invisible to the external field observation, namely cloaking is achieved (cf. \cite{Ack13,GWM3}). This is referred to as cloaking due to anomalous localized resonance (CALR) in the literature. Physically speaking, if CALR occurs, small objects locating beside the plasmonic structure are also invisible to the external field observation and this was confirmed in \cite{Ngu1}. Clearly, the occurrence of ALR depends on the delicate structure of the plasmon device, in particular, the appropriate choice of the plasmonic parameters. It turns out that the choice of the plasmon parameters that can induce the ALR and cloaking is closely connected to the spectrum of the classical Neumann-Poinc\'are operator in potential theory. All of those distinctive features make the ALR a unique subject for mathematical study.

Plasmon resonance and anomalous localized resonance have been extensively investigated and we refer to the aforementioned literature for related existing studies. In this paper, we consider the ALR and cloaking described by the 2D and 3D Helmholtz systems. In two dimensions, it describes the transverse electromagnetic wave propagation in the time-harmonic regime, and in three dimensions, it describes the time-harmonic acoustic wave propagation. We are mainly concerned with the mathematical study and provide a uniform treatment of the ALR and plasmonic cloaking associated to the Helmholtz system in both 2D and 3D. One of the major contributions is the mathematical construction of novel core-shell plasmonic structures that can induce ALR and cloaking at certain finite frequencies beyond the quasistatic limit. In many of the existing studies on plasmon resonance and ALR, the quasistatic approximation has played a critical role. There are also several studies that go beyond the quasistatic limit \cite{GWM3,KLO,Ngu2}. In \cite{GWM3}, double negative materials are employed in the shell and in \cite{Ngu2}, in addition to the employment of double negative materials, a so-called double-complementary medium structure is incorporated into the construction of the plasmonic device. CALR is shown to occur for the constructions in \cite{GWM3,Ngu2}. In \cite{KLO}, it is actually shown that resonance does not occur for the classical core-shell plasmonic structure without the quasistatic approximation as long as the core and shell are strictly convex. The result in \cite{GWM3,KLO} indicates that if one intends to construct a core-shell plasmonic structure which can induce CALR, the plasmonic configuration has to be properly chosen. In the current study, we do not employ the double negative materials in the shell. The crucial point of our study is that we take all the ingredients in a plasmonic configuration including the various material parameters, the size parameters, the external source as well as the frequency as a whole system. By delicately balancing all those configuration ingredients, we show that CALR can still be achieved in certain scenarios.

It is worth pointing out that as a significant byproduct of this study, we also derive the complete spectrum of the Neumann-Poinc\'are operator associated to the Helmholtz equation with finite frequencies in the radial geometry. The spectral result is also the first one in its type and of significant mathematical interest for its own sake. Finally, we would like to emphasize that the current article is a piece of theoretical work, and the practical construction or fabrication of the mathematically predicted new structures is beyond the scope of our study.

\subsection{Mathematical setup}

\begin{figure}[t]
  \centering
 {\includegraphics[width=4cm]{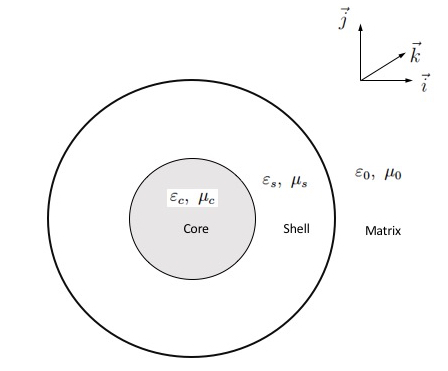}}
  \caption{Schematic illustration of the core-shell-matrix structure of the plasmonic configuration in transverse electromagnetic wave propagation. }
  \label{fig:1}
\end{figure}

We first consider the transverse electromagnetic wave propagation corresponding to an infinitely long cylindrical structure. Henceforth, we let $B_r$, $r\in\mathbb{R}_+$, signify a 3D ball or a 2D disk of radius $r$ and centred at the origin. Consider a material cylinder that is infinite along the $\vec{k}$-direction and of a cross section being the concentric disks $B_{r_i}$ and $B_{r_e}$, $r_i<r_e$; see Fig.~\ref{fig:1} for a schematic illustration. Suppose the electric permittivity and magnetic permeability in the core $B_{r_i}$ and in the shell $B_{r_e}\backslash\overline{B_{r_i}}$ are, respectively, given by $(\varepsilon_c, \mu_c)$ and $(\varepsilon_s,\mu_s)$. The exterior of the cylinder is the matrix which is supposed to be uniformly homogeneous. The permittivity and permeability of the matrix are, respectively, given by two positive constants $\varepsilon_0$ and $\mu_0$. Furthermore, we assume that the material in the shell is lossy, and its electric conductivity is signified by $\sigma_s$. An external electric/magnetic source of the form $f(x_1,x_2)\vec{k}$ is assumed, where for $x\in\mathbb{R}^3$, we use the convention $x=x_1\vec{i}+x_2\vec{j}+x_3\vec{k}$. $f(x_1,x_2)$ is compactly supported outside $B_{r_e}$. Both transverse electric (TE) and transverse magnetic (TM) poloarizations can be considered, and $e^{-\mathrm{i}\omega t}$ time-harmonic convention is also assumed.

The transverse electromagnetic scattering corresponding to the configuration described above is governed by the following 2D Helmholtz system
\begin{equation}\label{eq:2DH}
\begin{cases}
\displaystyle{\nabla\cdot\left[\frac{1}{\alpha(x_1,x_2)}\nabla u(x_1,x_2)\right]+\omega^2\beta(x_1,x_2) u(x_1,x_2)=f(x_1,x_2),}\medskip\\
\displaystyle{\lim_{|x'|\rightarrow+\infty} |x'|^{1/2}\left(\frac{x'}{|x'|}\cdot\nabla u-\mathrm{i} k u  \right)=0,\quad k=\omega\sqrt{\varepsilon_0\mu_0},}
\end{cases}
\end{equation}
where $x'=(x_1,x_2)$. In equation \eqref{eq:2DH}, for the TM wave scattering, we have $u=\mathbf{E}\cdot \vec{k}$ with $\mathbf{E}$ denoting the electric wave field, and
\begin{equation}\label{eq:tm2d}
(\alpha,\beta)=(\varepsilon_c,\mu_c)\chi(B_{r_i})+(\varepsilon_s+\mathrm{i}\delta,\mu_s)\chi(B_{r_e}\backslash\overline{B_{r_i}})+(\varepsilon_0,\mu_0)\chi(\mathbb{R}^2\backslash\overline{B_{r_e}}),
\end{equation}
where $\delta=\sigma_s/\omega$ and $\chi$ signifies the characteristic function; whereas for the TE wave scattering, we have $u=\mathbf{H}\cdot \vec{k}$ with $\mathbf{H}$ denoting the magnetic wave field, and
\begin{equation}\label{eq:tm2d}
(\alpha,\beta)=(\mu_c,\varepsilon_c)\chi(B_{r_i})+(\mu_s,\varepsilon_s+\mathrm{i}\delta)\chi(B_{r_e}\backslash\overline{B_{r_i}})+(\mu_0, \varepsilon_0)\chi(\mathbb{R}^2\backslash\overline{B_{r_e}}).
\end{equation}

In the three-dimensional case, the Helmholtz system of the form \eqref{eq:2DH} can be used to describe the acoustic wave scattering. Let $\rho(x)$,$n(x)$ and $\tau(x)$, respectively, denote the density, refractive index and absorption coefficient of an acoustic medium. Consider an acoustic configuration of the following form,
\begin{equation}\label{eq:a1}
(\alpha, \beta)=(\rho_c,n_c^2)\chi(B_{r_i})+(\rho_s, n_s^2+\mathrm{i}\delta)\chi(B_{r_e}\backslash\overline{B_{r_i}})+(\rho_0, n_0^2)\chi(\mathbb{R}^2\backslash\overline{B_{r_e}}),
\end{equation}
where $\delta=\tau/\omega$ and $\rho_0, n_0$ are two positive constants, signifying the uniformly homogeneous matrix. Let $f(x)$ denote an acoustic source compactly supported in the matrix. Then the acoustic scattering is described by the Helmholtz system \eqref{eq:2DH} with $x'$ replaced by $x$, $(\alpha, \beta)$ replaced by \eqref{eq:a1}, $k=\omega n_0\sqrt{\rho_0}$ and $|x'|^{1/2}$ by $|x|$.

In order to provide a uniform treatment of the transverse electromagnetic scattering and the acoustic scattering, we introduce the following Helmholtz system in $\mathbb{R}^N$, $N=2, 3$,
\begin{equation}\label{eq:helm1}
\begin{cases}
&\nabla\cdot(\epsilon(x)\nabla u(x))+k^2 q(x) u(x)=f(x)\quad x\in\mathbb{R}^N,\medskip\\
&\displaystyle{\lim_{|x|\rightarrow+\infty} |x|^{(N-1)/2}\left(\frac{x}{|x|}\cdot\nabla u-\mathrm{i} k u  \right)=0}.
\end{cases}
\end{equation}
In \eqref{eq:helm1}, the material configuration is specified as follows,
\begin{equation}\label{eq:mc1}
(\epsilon,q)=(\epsilon_c,1)\chi(B_{r_i})+(\epsilon_s+\mathrm{i}\delta, 1)\chi(B_{r_e}\backslash\overline{B_{r_i}})+(1, 1)\chi(\mathbb{R}^N\backslash\overline{B_{r_e}}).
\end{equation}

Several remarks are in order as follows. The Helmholtz system \eqref{eq:helm1}--\eqref{eq:mc1} includes \eqref{eq:2DH}-\eqref{eq:a1} as particular cases if the corresponding material parameters are appropriately chosen. For example, for the TM wave scattering, one can choose $\mu_c=\mu_s=\mu_0$. Then by a standard normalisation, one can reduce \eqref{eq:2DH}--\eqref{eq:tm2d} to \eqref{eq:helm1}--\eqref{eq:mc1}. It is emphasized that in our study, the derivation of the plasmonic cloaking device is a constructive procedure. That is, with the appropriate design of the material configuration for the core-shell-matrix structure, the ALR and cloaking are achievable. By introducing \eqref{eq:mc1}, the constructive procedure is reduced to properly choosing the parameters $\epsilon_c, \epsilon_s$ and $\delta$. Second, the lossy parameter $\delta$ is attached to $\epsilon$ in the shell, and instead, it can also be attached to $q$ in the shell. Indeed, following a similar reduction procedure as above, one can show that for the TE and acoustic scattering, the lossy parameter is actually attached to $q$ in the shell. However, in order to simplify the exposition and provide a uniform mathematical treatment, we only consider the case that $\delta$ is attached to $\epsilon$ in the shell. Nevertheless, we emphasize that all of our subsequent mathematical arguments can be readily extended to the case that $\delta$ is instead attached to $q$ in the shell with slight modifications. Indeed, theoretically speaking, $\delta$ is a regularisation parameter and for most of the cases in our study, the resonance occurs in the limits as $\delta\rightarrow+0$. Moreover, for many of the existing studies in the literature, in order to conveniently treat the quasistatic approximation with $k$ either formally taking to be zero or $k\ll 1$, one always attaches the regularisation parameter $\delta$ to $\epsilon$. We also adopt this convention such that our current study can be easily connected to the existing ones in the literature.

For the solution $u\in H_{loc}^1(\mathbb{R}^N)$ to \eqref{eq:helm1}--\eqref{eq:mc1}, we define
\begin{equation}\label{eq:energya1}
\mathscr{E}_\delta[u]:=\delta \int_{B_{r_e}\backslash\overline{B_{r_i}}} \left|\nabla u(x) \right|^2\ dx,
\end{equation}
which signifies the energy dissipation of the Helmholtz system. The configuration $(\epsilon_c,\epsilon_s+\mathrm i\delta, f)$ in \eqref{eq:helm1}--\eqref{eq:mc1} is said to be {\it resonant} if
\begin{equation}\label{eq:res1a2}
\limsup_{\delta\rightarrow\delta_0} \mathscr{E}_\delta[u]=+\infty,
\end{equation}
where $\delta_0$ is a fixed positive constant. It is remarked that for the existing studies in the literature, it is always assumed that $\delta_0=0$. However, without the quasistatic approximation in the current article, one sometime would need a significant loss in the plasmonic shell in order to induce resonances. This is in sharp difference from the ALR and cloaking in the quasistatic regime. An explanation is that the Neumann-Poinc\'re operator associated to the Helmholtz system \eqref{eq:helm1} at finite frequencies is non self-adjoint, and the eigenvalues are complex; see Remark~\ref{rem:lll1} for more relevant discussion. Nevertheless, under generic conditions of the source term, we can still retain that $\delta_0=0$ in most of the cases in our subsequent study. If in addition to \eqref{eq:res1a2}, there exists $R'>r_e$ and $C\in\mathbb{R}_+$ such that the following condition holds,
\begin{equation}\label{eq:res2}
|u(x)|\leq C\quad\mbox{for}\ \ |x|\geq R',
\end{equation}
then we say that anomalous localized resonance (ALR) occurs. If ALR occurs, then by straightforward scaling analysis one can show that the energy dissipation, associated to $(\epsilon,q,f/\eta)$ with $\eta:=\sqrt{\mathscr{E}_\delta}$, is normalized and the wave field outside $B_{R'}$ tends to zero as $\delta\rightarrow\delta_0$. Hence, both the plasmonic structure $(B_{r_i}; \epsilon_c)\oplus (B_{r_e}\backslash\overline{B_{r_i}}; \epsilon_s+\mathrm{i}\delta)$ and the source $f/\sqrt{\mathscr{E}_\delta}$ are invisible with respect to the wave measurement in the exterior of $B_{R'}$. In such a case, we say that cloaking due to anomalous localized resonance (CALR) occurs. For our subsequent study, we shall need relax a bit the requirements for resonance and ALR as follows. For a given sufficiently large $M\in\mathbb{R}_+$, if the configuration $(\epsilon_c,\epsilon_s+\mathrm{i}\delta, f)$ in \eqref{eq:helm1}--\eqref{eq:mc1} satisfies 
\begin{equation}\label{eq:res1}
\mathscr{E}_\delta[u]>M, 
\end{equation}
then we say that the configuration is {\it weakly resonant}; and if furthermore, \eqref{eq:res2} is also fulfilled, then we say that weak ALR occurs. The major reason for us to introduce the weak resonance and weak ALR is that our mathematical arguments are constructive, and we always construct the plasmonic structures which can make the corresponding energy dissipation large enough while the wave filed remain bounded outside a certain region, namely both \eqref{eq:res1} and \eqref{eq:res2} are fulfilled. From a practical point of view, the weak ALR should be enough for the cloaking application. In what follows, we shall not distinguish between ALR and weak ALR, and it should be clear from the context.

\subsection{Summary of the main results}

In the rest of this section, we briefly summarize the major findings of this paper for the convenience of readers. We consider two types of plasmonic structures, one with a core of the form \eqref{eq:mc1} and the other one with no core, i.e. $r_i=0$ in \eqref{eq:mc1}. Theorems~\ref{thm:main1} and \ref{thm:main2d1} contain the resonance results in 3D and 2D, respectively, for plasmonic structures with no cores. It is shown that in 3D, if the plasmon parameter is $\epsilon_s=-1-1/n_0$ with $n_0\in\mathbb{N}$ and $n_0\gg 1$ properly chosen, and in 2D, $\epsilon_s=-1$, and if the conditions \eqref{eq:k_no_three} for 3D and \eqref{eq:k_no_two} for 2D, are respectively fulfilled, then resonance occurs. Theorems~\ref{thm:CALR} and \ref{thm:CALR_two} give the ALR results in 3D and 2D, respectively, for core-shell plasmonic structures. In 3D, it is shown that if the medium parameters are chosen according to \eqref{eq:CALR3d1} and if the condition \eqref{eq:k_three} is fulfilled, then ALR occurs. Furthermore, there is a critical radius such that if the source is located outside the critical radius, then resonance does not occur. In 2D, it is shown that if $\epsilon_s=-1$ in the shell and the medium parameters fulfil the condition \eqref{eq:k_two}, then ALR occurs and moreover, there exists a critical radius such that if the source is located outside that radius, then resonance does not occur.

The outline of our paper is as follows. In Section 2, we consider the plasmon resonance and ALR results in three dimensions. Section 3 is devoted to the derivation of the spectral system of the Neumann-Poincar\'e operator as well as its application to the plasmon resonance study. Finally, in Section 4, we consider the plasmon resonance and ALR results in two dimensions.

\section{Plasmon resonance and ALR results in three dimensions}

In this section, we consider the resonance and ALR results associated with the Helmholtz system \eqref{eq:helm1}--\eqref{eq:mc1} in three dimensions. 

\subsection{Resonance result with no core in $\mathbb{R}^3$}

Assume that $B_{r_i}=\emptyset$. The Helmholtz system \eqref{eq:helm1}--\eqref{eq:mc1} can be simplified as the following transmission problem,
\begin{equation}\label{eq:general equation_without_core}
 \left\{
   \begin{array}{ll}
     \triangle u(x) + k_{1,\delta}^2 u(x) =0, & x\in B_{r_e},\medskip \\
     \triangle u(x) + k^2 u(x) =f(x), & x\in \mathbb{R}^3\backslash\overline{B_{r_e}}, \medskip\\
     {u|_-=u|_+, \quad (\epsilon_s+i\delta) \frac{\partial u}{\partial\nu}\big|_-= \frac{\partial u}{\partial\nu}\big|_+,} & x\in\partial B_{r_e},\medskip \\
   \displaystyle{\lim_{|x|\rightarrow+\infty} x\cdot\nabla u-\mathrm{i} k |x| u=0},
   \end{array}
 \right.
\end{equation}
where and throughout the rest of the paper,
\begin{equation}\label{eq:definition_k_1}
  k_{1,\delta}:=\frac{k}{\sqrt{\epsilon_s+i\delta}}\quad \mbox{with}\quad \Re{k_{1,\delta}}>0, \quad \mbox{and} \quad \Im{k_{1,\delta}}<0.
\end{equation}
It is supposed that the source $f(x)$ is supported outside a ball $B_{R_1}$ with $R_1>r_e$.

Let $j_n(t)$ and $h_n^{(1)}(t)$ be, respectively, the spherical Bessel and Hankel functions of order $n\in \mathbb{N}$, and $Y_n(\hat{x})$ be the spherical harmonics. Let $G(x)$ be the fundamental solution of the operator $\triangle+k^2$, namely
\begin{equation}\label{eq:fundamental_solution}
 G^{k}(x)=-\frac{e^{ik|x|}}{4\pi |x|}.
\end{equation}
The Newtonian potential of $f(x)$ is defined as
\begin{equation}\label{eq:potential_F}
  F(x)=\int_{\mathbb{R}^3} G^k(x-y)f(y)dy, \quad x\in\mathbb{R}^3,
\end{equation}
which verifies $(\triangle + k^2)F(x)=0,\ x\in B_{R_1}$. For $x\in B_{R_1}$, the potential $F(x)$ can be written as
\begin{equation}\label{eq:potential_F_expression}
 F(x)=\sum_{n=0}^{\infty} \beta_n j_n(kr)Y_n(\hat{x}).
\end{equation}
With the above preparations, the solution to \eqref{eq:general equation_without_core} can be written for $x\in B_{R_1}$ as
\begin{equation}\label{eq:solution_without_core_ge}
  u(x)=\left\{
         \begin{array}{ll}
          {\sum_{n=0}^{\infty} a_n j_n(k_{1,\delta}r) Y_n(\hat{x}),} & x\in B_{r_e}, \\
         { \sum_{n=0}^{\infty} b_n j_n(kr) Y_n(\hat{x}) + c_n h_n^{(1)}(kr) Y_n(\hat{x}) ,} & x\in B_{R_1}\backslash \overline{B_{r_e}}.
         \end{array}
       \right.
\end{equation}
Applying the third condition in \eqref{eq:general equation_without_core} on $\partial B_{r_e}$ to $u$ represented in \eqref{eq:solution_without_core_ge}, we have
\begin{equation}\label{eq:transmission_condition}
  \left\{
    \begin{array}{ll}
      a_n j_n(k_{1,\delta} r_e)=b_n j_n(kr_e) + c_n h_n^{(1)}(kr_e) ,\medskip \\
      \sqrt{\epsilon_s+i\delta}a_n j_n^{\prime}(k_{1,\delta} r_e)=b_n j_n^{\prime}(kr_e) + c_n  h_n^{(1)\prime}(kr_e).
    \end{array}
  \right.
\end{equation}
Solving the equations in \eqref{eq:transmission_condition}, we further have
\begin{equation}\label{eq:solution_without_core}
  \left\{
    \begin{array}{ll}
     \displaystyle{ a_n=b_n \frac{j_n^{\prime}(kr_e) h_n^{(1)}(kr_e)- h_n^{(1)\prime}(kr_e)j_n(kr_e)} {\sqrt{\epsilon_s+i\delta} j_n^{\prime}(k_{1,\delta} r_e)h_n^{(1)}(kr_e) - h_n^{(1)\prime}(kr_e)j_n(k_{1,\delta} r_e)  },}\medskip\\
    \displaystyle{  c_n=b_n \frac{j_n^{\prime}(kr_e) j_n(k_{1,\delta} r_e) -\sqrt{\epsilon_s+i\delta}j_n^{\prime}(k_{1,\delta} r_e)j_n(kr_e)}{\sqrt{\epsilon_s+i\delta} j_n^{\prime}(k_{1,\delta} r_e)h_n^{(1)}(kr_e) - h_n^{(1)\prime}(kr_e)j_n(k_{1,\delta} r_e)  }.}
    \end{array}
  \right.
\end{equation}
Since when $|x|>r_e$, $u(x)-F(x)$ satisfies
\begin{equation}\label{eq:r1}
(\triangle + k^2)(u(x)-F(x)) =0\quad\mbox{and}\quad  (u(x)-F(x))\rightarrow 0 \ \ \mbox{as}\ \ |x|\rightarrow \infty,
\end{equation}
one can show that there holds
\begin{equation}\label{eq:bn}
  b_n=\beta_n.
\end{equation}
Therefore, the solution to \eqref{eq:general equation_without_core} is given by \eqref{eq:solution_without_core_ge} with the coefficients defined in \eqref{eq:solution_without_core} and \eqref{eq:bn}.

 We are in a position to give the representation of the energy dissipation $\mathscr{E}_{\delta}[u]$. Direct calculations together with the help of Green's formula yield that
\begin{equation}\label{eq:en1}
 \begin{split}
   \mathscr{E}_{\delta}[u] & = \delta  \left( (k_{1,\delta})^2\int_{B_{r_e}}|u|^2 dx + \int_{\partial B_{r_e}}\frac{\partial u}{\partial\nu}\overline{u} ds(x)\right) \\
     & =\sum_{n=0}^{\infty}\delta |a_n|^2 \left( (k_{1,\delta})^2\int_{0}^{r_e}|j_n(k_{1,\delta}r)r|^2 dx + k_{1,\delta}r_e^2j_n^{\prime}(k_{1,\delta}r_e)\overline{j_n(k_{1,\delta}r_e)} \right).
 \end{split}
\end{equation}
Clearly, \eqref{eq:en1} indicates that if there exists $n_0\in\mathbb{N}$ such that
\begin{equation}\label{eq:condition_resonance}
 \delta |a_{n_0}|^2\rightarrow \infty\ \ \mbox{as}\ \ \delta\rightarrow\delta_0,
\end{equation}
then plasmon resonance occurs. To that end, we next analyze $a_n$. Using the fact
 \[
  j_n(t)h_n^{(1)\prime}(t)-j_n^{\prime}(t)h_n^{(1)}(t)=\frac{i}{t^2},
\]
one has from \eqref{eq:solution_without_core} and \eqref{eq:bn} that
\begin{equation}\label{eq:aaa1}
   a_n=\frac{1}{(kr_e)^2} \frac{-i \beta_n} {\sqrt{\epsilon_s+i\delta} j_n^{\prime}(k_{1,\delta} r_e)h_n^{(1)}(kr_e) - h_n^{(1)\prime}(kr_e)j_n(k_{1,\delta} r_e)  }.
\end{equation}
In order to make \eqref{eq:condition_resonance} happen, one should have by using \eqref{eq:aaa1} that
\begin{equation}\label{eq:fourier_resonance}
  \sqrt{\epsilon_s+i\delta} j_n^{\prime}(k_{1,\delta} r_e)h_n^{(1)}(kr_e) - h_n^{(1)\prime}(kr_e)j_n(k_{1,\delta} r_e)\rightarrow 0\ \ \mbox{as}\ \ \delta\rightarrow\delta_0.
\end{equation}
That is, we need to determine an appropriate plasmon material distribution $\epsilon_s+i\delta$ such that \eqref{eq:fourier_resonance} can occur. It is noted that \eqref{eq:fourier_resonance} is a nonlinear equation in terms of $\epsilon_s+i\delta$ and $n$. The rest of this subsection is devoted to analyzing this nonlinear equation.

\subsubsection{Simple numerical constructions}

Based on our earlier calculations, we first numerically solve the nonlinear equation \eqref{eq:fourier_resonance} to obtain some plasmonic structures that can induce the resonance. For simplicity, we set $k=1$ and $r_e=1$, and choose the source $f$ such that the Newtonian potential $F(x)$ in \eqref{eq:potential_F_expression} satisfy $\beta_n=1$ when $n=n_0$ and $\beta_n=0$ when $n\neq n_0$. 
Let the plasmon parameters be chosen as follows,
\begin{equation}
  \epsilon_s=\epsilon_{n_0} \quad \mbox{and} \quad \delta=\delta_{n_0},
\end{equation}
which depends on $n_0$. We numerically find the following parameters that can induce resonance,
\begin{equation}\label{eq:data}
  \left\{
    \begin{array}{ll}
      \epsilon_{n_0}=-1.303728$ \quad $\delta_{n_0}=0.498620 & \mbox{for}\quad n_0=1, \\
      \epsilon_{n_0}=-1.237160$ \quad $\delta_{n_0}=0.038434 & \mbox{for}\quad n_0=2, \\
      \epsilon_{n_0}=-1.224395$ \quad $\delta_{n_0}=0.001203 & \mbox{for}\quad n_0=3, \\
      \epsilon_{n_0}=-1.190550$ \quad $\delta_{n_0}=0.000019 & \mbox{for}\quad n_0=4.
    \end{array}
  \right.
\end{equation}
In Fig.~2, we plot the energy $\mathscr{E}_\delta$ against the change of the loss parameter $\delta$ when $\epsilon_{n_0}$ is fixed for $n_0=1,2,3,4$, which clearly demonstrate the resonance results at those critical values in \eqref{eq:data}. The numerical results also motivate us that as $n_0$ increases, the plasmon parameter $\epsilon_s$ approaches $-1$ and the loss parameter $\delta$ approaches $0$. 

\begin{figure}\label{fig:resoance}
  \centering
 {\includegraphics[width=4cm]{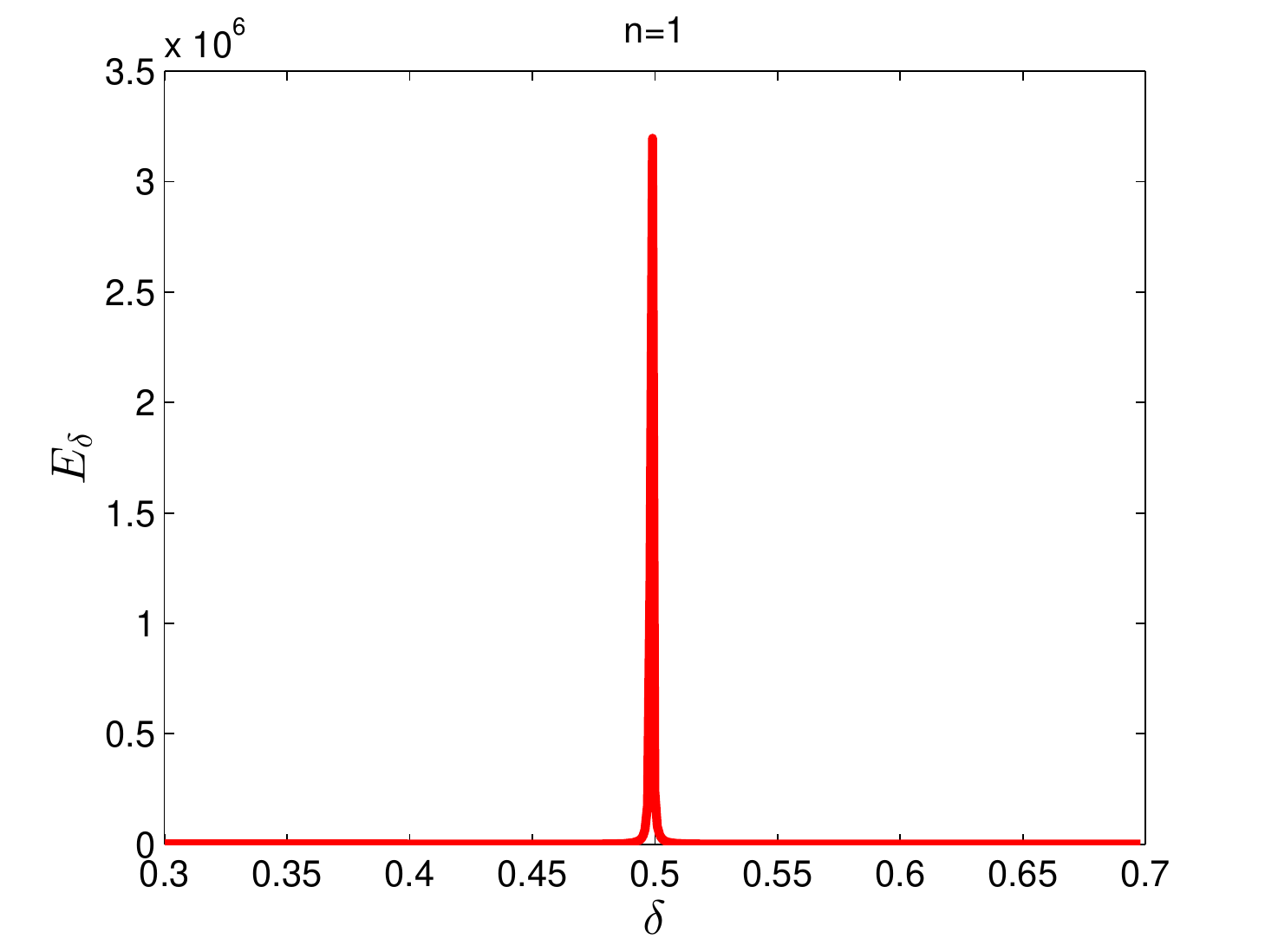}}
 {\includegraphics[width=4cm]{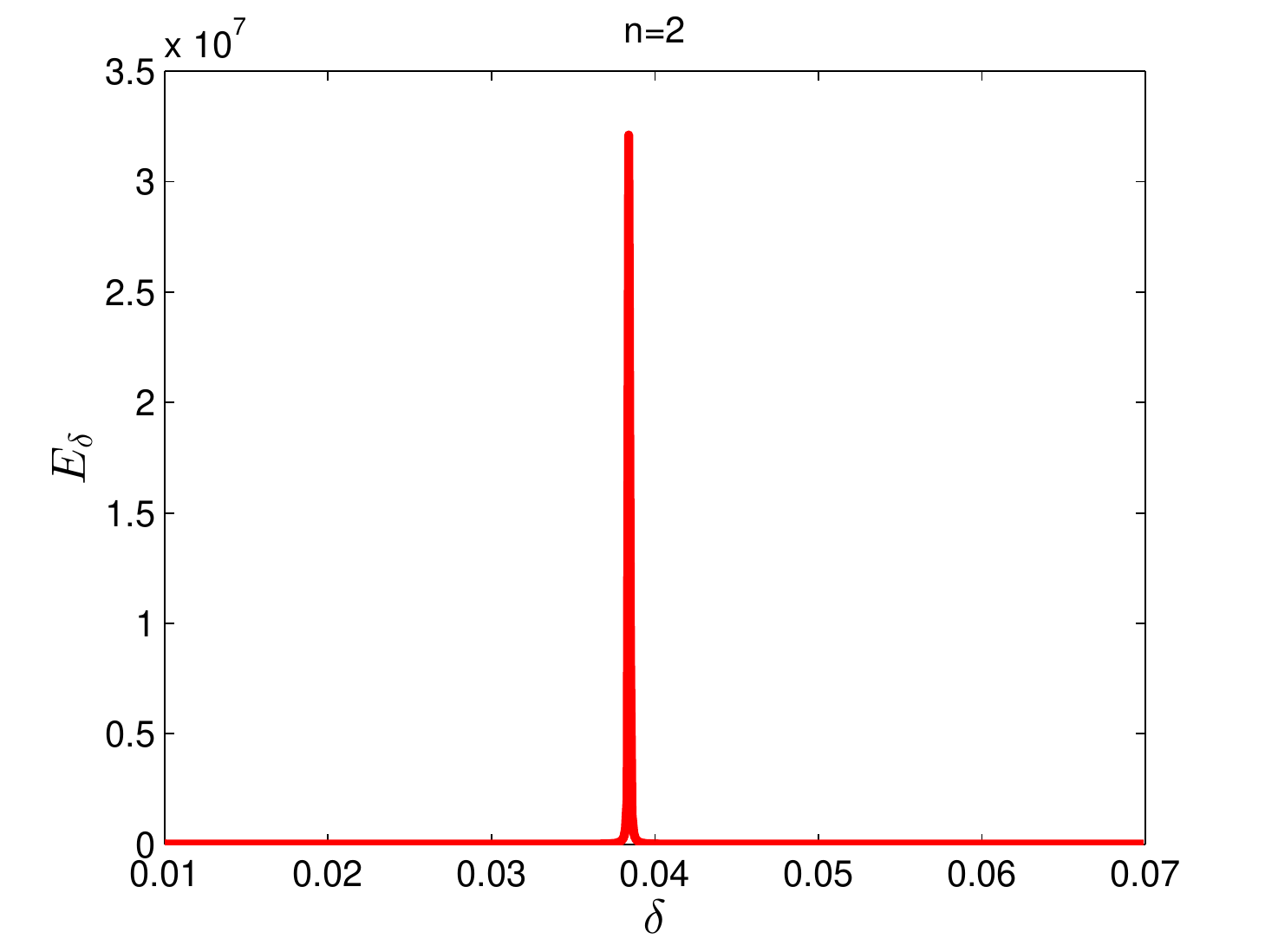}\\}
 {\includegraphics[width=4cm]{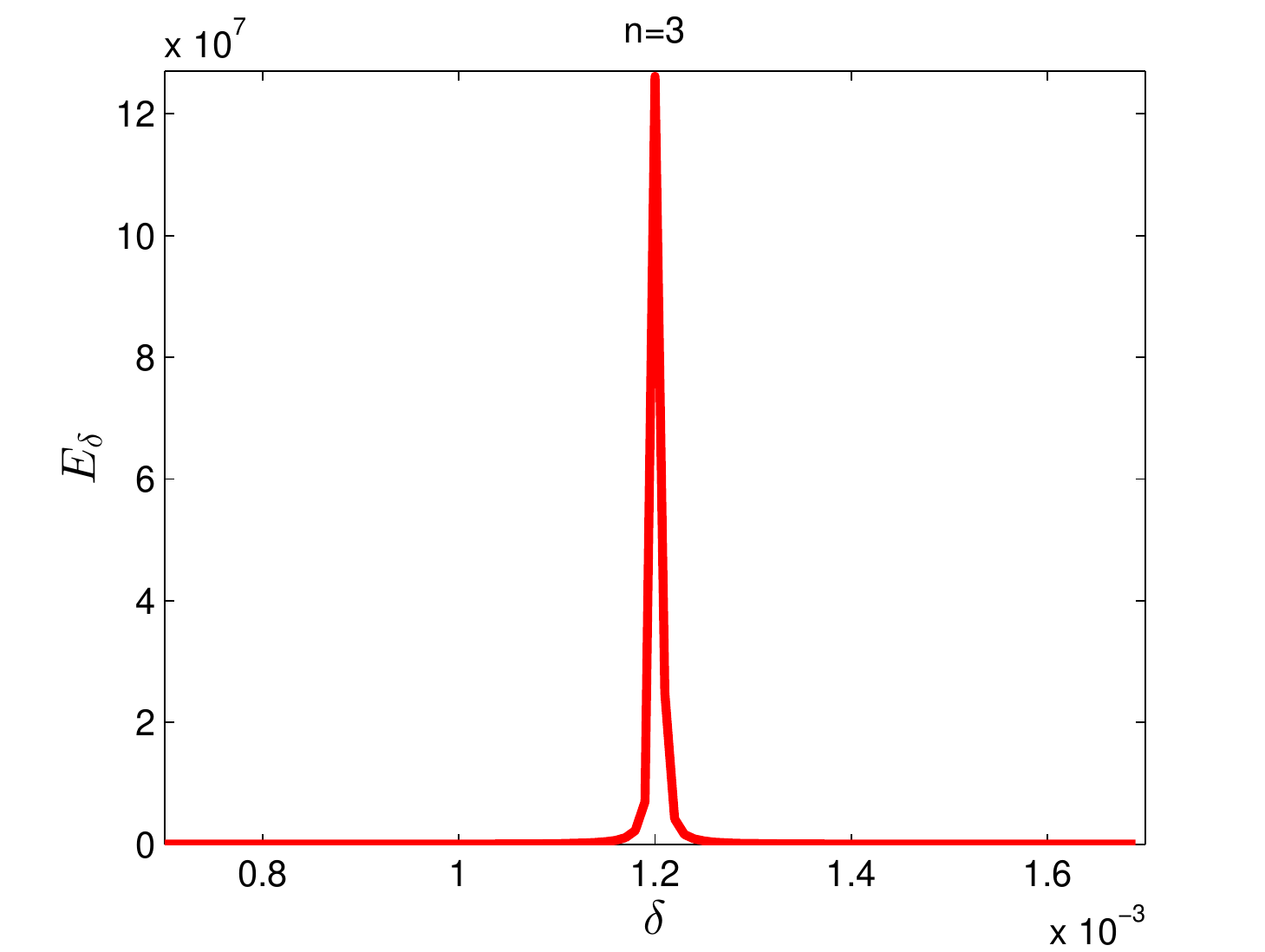}}
 {\includegraphics[width=4cm]{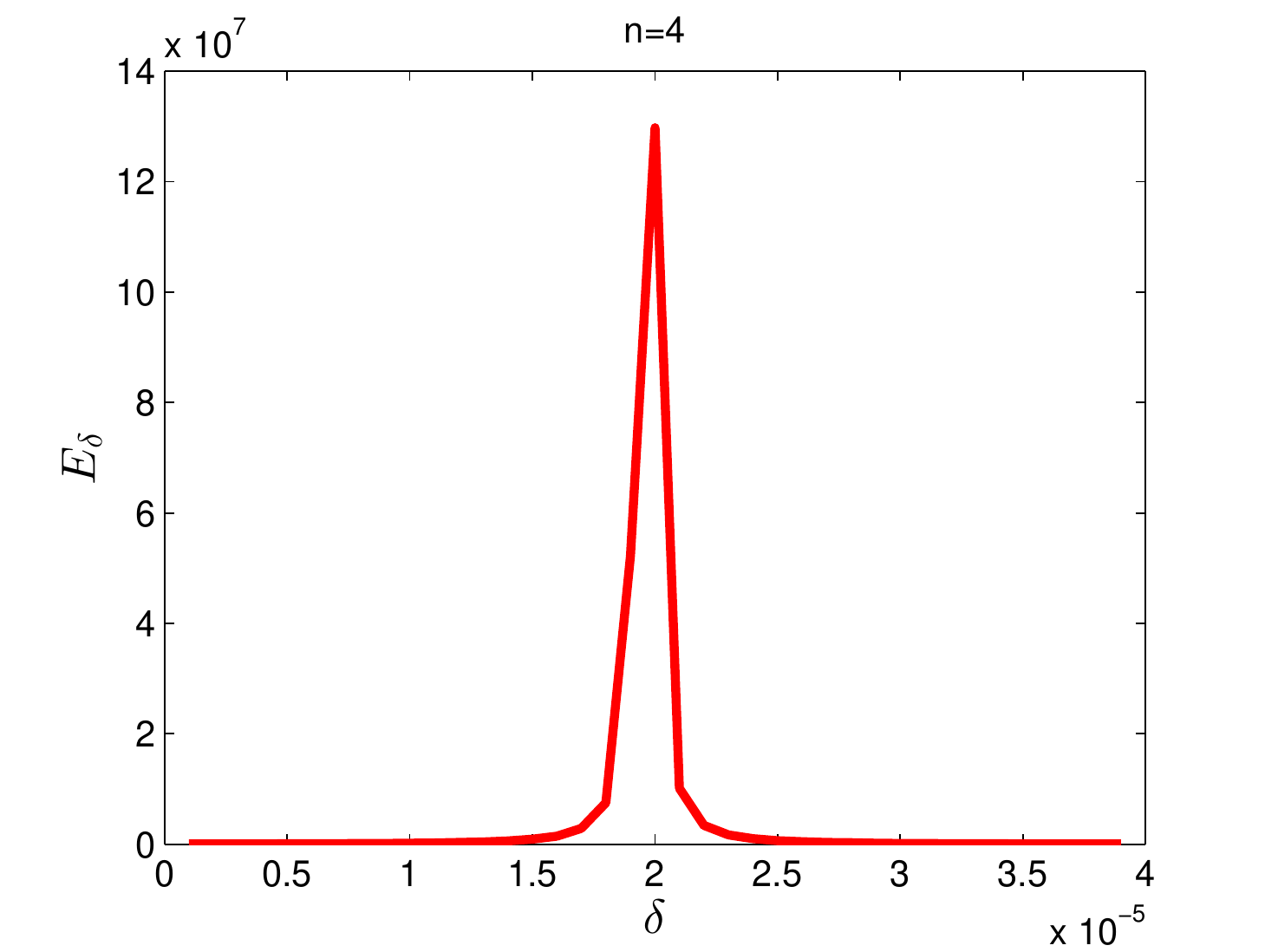}}
  \caption{The change of the dissipation energy $\mathscr{E}_\delta$ with respect to the change of $\delta$ when $\epsilon_{n_0}$ is fixed, $n_0=1,2,3,4$, in $\mathbb{R}^3$. }
\end{figure}

\subsubsection{General construction}

We next present the construction of a general plasmonic structure that can induce resonance. To that end, we first recall the following asymptotic properties of the spherical Bessel and Hankel functions, $j_n(t)$ and $h_n^{(1)}(t)$ for sufficiently large $n$ (cf. \cite{CK}),
\begin{equation}\label{eq:asymptotic_j}
  j_n(t)=\hat{j}_n(t)\left(1+ \check{j}_n(t) \right),\quad h_n^{(1)}(t)=\hat{h}_n^{(1)}(t) \left(1+\check{h}_n^{(1)}(t) \right),
\end{equation}
where  $\hat{f}_n(t)$, $\check{f}_n(t)$, $\hat{h}_n^{(1)}(t)$ and $\check{h}_n^{(1)}(t)$ are defined by
\begin{equation}\label{eq:dd1}
\hat{j}_n(t)=\frac{t^n}{(2n+1)!!}, \ \check{j}_n(t)=\mathcal{O}\left(\frac{1}{n}\right),\ \hat{h}_n^{(1)}(t)=\frac{(2n-1)!!}{it^{n+1}},  \ \check{h}_n^{(1)}(t)=\mathcal{O}\left(\frac{1}{n}\right).
\end{equation}
We have the following theorem regarding the plasmon resonance in three dimensions.
\begin{thm}\label{thm:main1}
Consider the Helmholtz system \eqref{eq:general equation_without_core}, where the Newtonian potential of the source $f$ is given in \eqref{eq:potential_F_expression}. Let $n_0\in\mathbb{N}$ fulfil the following two conditions:
 \begin{enumerate}
     \item $n_0$ is sufficiently large such that the asymptotic properties of $j_{n}(t)$ and $h_{n}^{(1)}(t)$ in \eqref{eq:asymptotic_j} hold for $n\geq n_0$;
     \item $\beta_{n_0}\neq 0$.
   \end{enumerate}
Let the plasmon parameters be chosen of the following form
  \begin{equation}\label{eq:ccc1}
    \epsilon_s=-1-\frac{1}{n_0} ,\quad \delta\in\mathbb{R}_+ \quad \mbox{and} \quad \delta\ll1. 
  \end{equation}
  Then if the plasmon configuration fulfils the following condition, 
  \begin{equation}\label{eq:k_no_three}
 \hat{j}_{n_0}(k_{1,\delta} r_e) \hat{h}_{n_0}^{(1)}(kr_e)  \left(  \sqrt{\epsilon_s+i\delta}  \check{j}_{n_0}(t)^{\prime}(k_{1,\delta} r_e)\left(1+\check{h}_{n_0}^{(1)}(kr_e)\right)  -\check{h}_{n_0}^{(1)\prime}(kr_e)\left(1+\check{j}_{n_0}(k_{1,\delta} r_e)\right)   \right)=0,  
\end{equation}
one has that $\mathscr{E}_\delta[u]\sim \delta^{-1}$. 
\end{thm}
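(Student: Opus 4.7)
The plan is to establish $\mathscr{E}_\delta[u]\sim\delta^{-1}$ by isolating the $n_0$-th spherical harmonic as the resonant mode and controlling the remaining modes. Starting from the series representation \eqref{eq:en1} and the closed-form \eqref{eq:aaa1} for $a_n$, the task reduces to analyzing the denominator
\[
D_n:=\sqrt{\epsilon_s+\mathrm{i}\delta}\,j_n'(k_{1,\delta}r_e)h_n^{(1)}(kr_e)-h_n^{(1)'}(kr_e)j_n(k_{1,\delta}r_e),
\]
and showing $|D_{n_0}|\sim\delta$ while the contributions from $n\neq n_0$ remain $o(\delta^{-1})$ in the energy sum.

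First I would substitute the splittings \eqref{eq:asymptotic_j}--\eqref{eq:dd1} into $D_{n_0}$. Because $\hat{j}_n$ and $\hat{h}_n^{(1)}$ are monomials, their derivatives satisfy $\hat{j}_n'(t)=(n/t)\hat{j}_n(t)$ and $\hat{h}_n^{(1)'}(t)=-((n+1)/t)\hat{h}_n^{(1)}(t)$. Factoring $\hat{j}_{n_0}(k_{1,\delta}r_e)\hat{h}_{n_0}^{(1)}(kr_e)$ out of $D_{n_0}$ splits it into a principal piece
\[
\Bigl[\sqrt{\epsilon_s+\mathrm{i}\delta}\,\frac{n_0}{k_{1,\delta}r_e}+\frac{n_0+1}{kr_e}\Bigr]\bigl(1+\check{j}_{n_0}(k_{1,\delta}r_e)\bigr)\bigl(1+\check{h}_{n_0}^{(1)}(kr_e)\bigr)
\]
plus a correction piece that coincides, up to the same prefactor $\hat{j}_{n_0}\hat{h}_{n_0}^{(1)}$, with the bracket in \eqref{eq:k_no_three}.

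Next I would exploit the identity $\sqrt{\epsilon_s+\mathrm{i}\delta}/k_{1,\delta}=(\epsilon_s+\mathrm{i}\delta)/k$, which reduces the bracket in the principal piece to $[(\epsilon_s+\mathrm{i}\delta)n_0+(n_0+1)]/(kr_e)$. The choice \eqref{eq:ccc1} yields $(\epsilon_s+\mathrm{i}\delta)n_0+(n_0+1)=\mathrm{i}\delta n_0$ exactly, while the hypothesis \eqref{eq:k_no_three} annihilates the correction piece. Combining these,
\[
D_{n_0}=\hat{j}_{n_0}(k_{1,\delta}r_e)\hat{h}_{n_0}^{(1)}(kr_e)\cdot\frac{\mathrm{i}\delta n_0}{kr_e}\bigl(1+o(1)\bigr),\qquad\delta\to 0,
\]
so from \eqref{eq:aaa1}, $|a_{n_0}|\sim\delta^{-1}$. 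I would then evaluate the parenthetical factor in \eqref{eq:en1} for $n=n_0$, showing it admits a nonzero $O(1)$ limit using that $k_{1,\delta}r_e$ stays bounded with bounded imaginary part under \eqref{eq:definition_k_1} and that $\beta_{n_0}\neq 0$; this yields the lower bound $\mathscr{E}_\delta[u]\gtrsim\delta^{-1}$.

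For the matching upper bound I would show the tail $\sum_{n\neq n_0}\delta|a_n|^2(\cdots)$ is $O(1)$. The bracket $(\epsilon_s+\mathrm{i}\delta)n+n+1=1-n/n_0+\mathrm{i}\delta n$ is bounded below in modulus by a positive constant depending on $n_0$ for every $n\neq n_0$, so $a_n$ is bounded uniformly in $\delta$ for each such $n$; for large $n$ the factor $\hat{j}_n(k_{1,\delta}r_e)\hat{h}_n^{(1)}(kr_e)$ is of order $|\epsilon_s+\mathrm{i}\delta|^{-n/2}$, which grows at worst geometrically, whereas $\beta_n$ decays faster than any geometric rate because $F$ is analytic on $\overline{B_{r_e}}\subset B_{R_1}$, so the tail converges absolutely. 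The main obstacle is the third step above, namely organizing the two successive cancellations in $D_{n_0}$ (from $\epsilon_s=-1-1/n_0$ and from \eqref{eq:k_no_three}) while keeping uniform control of the $\check{j}_{n_0}$, $\check{h}_{n_0}^{(1)}$ remainders and their derivatives; a secondary difficulty is verifying that the principal bracket does not accidentally vanish for some other index $n$ under the stated choice \eqref{eq:ccc1}.
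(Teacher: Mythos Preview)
Your proposal is correct and follows essentially the same route as the paper. The paper's own proof is extremely terse: it asserts without detail that under \eqref{eq:ccc1} and \eqref{eq:k_no_three} one has $|D_{n_0}|\approx\delta(1+\mathcal{O}(1/n_0))$, then applies Green's formula to bound $\mathscr{E}_\delta[u]$ below by the $n_0$-th term, obtaining $\approx|\beta_{n_0}|^2 n_0/\delta$. Your decomposition of $D_{n_0}$ into a principal piece (killed to order $\delta$ by $\epsilon_s=-1-1/n_0$ via the identity $\sqrt{\epsilon_s+\mathrm{i}\delta}/k_{1,\delta}=(\epsilon_s+\mathrm{i}\delta)/k$) and a correction piece (killed by \eqref{eq:k_no_three}) is exactly the ``straightforward calculation'' the paper suppresses. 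One point worth noting: the paper's proof only exhibits the lower bound $\mathscr{E}_\delta[u]\gtrsim\delta^{-1}$ and does not address the tail $\sum_{n\neq n_0}$ at all, whereas you correctly recognize that the stated conclusion $\mathscr{E}_\delta[u]\sim\delta^{-1}$ also requires an upper bound; your argument for the tail (nonvanishing of $1-n/n_0+\mathrm{i}\delta n$ for integer $n\neq n_0$, combined with super-geometric decay of $\beta_n$ from analyticity of $F$ on $\overline{B_{r_e}}$) is the right way to close that gap.
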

\begin{proof}
Suppose that $\epsilon_s=-1-1/n_0$ with $n_0$ satisfying the two conditions stated in the theorem. By straightforward calculations, we first have that
\begin{equation}
  \left|\sqrt{\epsilon_s+i\delta} j_{n_0}^{\prime}(k_{1,\delta}r_e)h_{n_0}^{(1)}(kr_e) - h_{n_0}^{(1)\prime}(kr_e)j_{n_0}(k_{1,\delta}r_e) \right|\approx \delta \left(1+\mathcal{O}\left(\frac{1}{n_0}\right) \right).
\end{equation}
From the solution given in \eqref{eq:solution_without_core} and with the help of Green's formula, one has that
\[
 \begin{split}
   & \mathscr{E}_{\delta}[u] =\delta\int_{B_{r_e}}|\nabla u|^2dx = \delta k_{1,\delta}^2\int_{B_{r_e}}|u|^2dx + \delta\int_{\partial B_{r_e}} \frac{\partial u}{\partial \nu} \overline{u}ds(x)\\
     & \geq \delta k_{1,\delta}^2\int_{B_{r_e}}|a_{n_0} j_{n_0}(k_{1,\delta}r) Y_{n_0}(\hat{x})|^2dx \\
     &  +\delta\int_{\partial B_{r_e}} a_{n_0} k_{1,\delta} j_{n_0}^{\prime}(k_{1,\delta}r_e)  \overline{ \left(a_{n_0}   j_{n_0}(k_{1,\delta}r_e) \right)} |Y_{n_0}(\hat{x})|^2 ds(x) \approx \frac{|\beta_{n_0}|^2 n_0}{\delta}\left(1+\mathcal{O}\left(\frac{1}{n_0}\right) \right),
 \end{split}
\]
which readily completes the by noting that $\beta_{n_0}\neq 0$.
\end{proof}
\begin{rem}\label{rem:2.1}
Since $\delta\ll 1$, we see that the plasmon configuration in Theorem~\ref{thm:main1} induces resonance. The next thing one needs to verify is that the equation \eqref{eq:k_no_three} yields a nonempty set of parameters. This is indeed the case and we next present a numerical example for illustration. We set $r_e=1, n_0=500, \epsilon_s=1-1/n_0$ and $\delta=0.5^{n_0}$, and let $k$ be a free parameter. Fig.~3 plots the quantity in the LHS of \eqref{eq:k_no_three} against $k$ around $k=8$. One readily sees that there do exist $k$'s such that \eqref{eq:k_no_three} holds. Hence, resonance occurs with the aforesaid parameters at those $k$'s. One can also fix $k$ and determine the other parameters by solving \eqref{eq:k_no_three}. 
\end{rem}

\begin{figure}[t]
  \centering
 {\includegraphics[width=5cm]{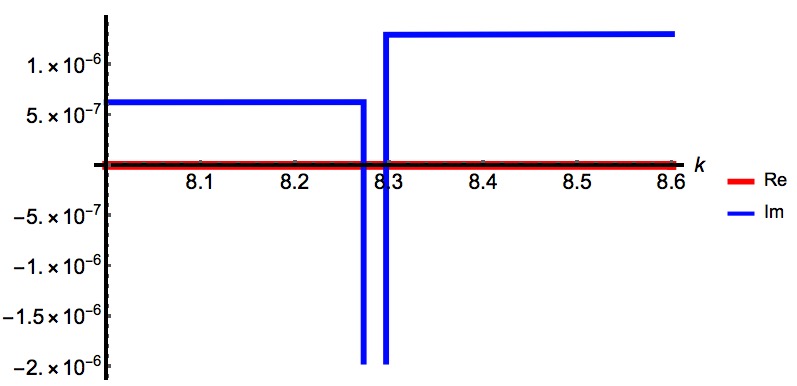}}
  \caption{The real and imaginary parts of the LHS quantity in \eqref{eq:k_no_three} with respect the change of the wavenumber $k$. }
  \label{fig:3}
\end{figure}

\subsection{ALR result with a core in $\mathbb{R}^3$}

In this subsection, we consider the Helmholtz system \eqref{eq:helm1}--\eqref{eq:mc1} in $\mathbb{R}^3$ with $r_i\neq 0$, and show that ALR can be induced. Let the Newtonian potential of the source $f$ is again given in \eqref{eq:potential_F_expression}. The solution to \eqref{eq:helm1}--\eqref{eq:mc1} in $B_{R_1}$ can be expressed as
\begin{equation}\label{eq:e1}
  u(x)=\left\{
         \begin{array}{ll}
          {\sum_{n=0}^{\infty} a_n j_n(\frac{kr}{\sqrt{\epsilon_c}}) Y_n(\hat{x})}, & x\in B_{r_i}, \\
          {\sum_{n=0}^{\infty} b_n j_n(k_{1,\delta}r) Y_n(\hat{x}) + c_n h_n^{(1)}(k_{1,\delta}r) Y_n(\hat{x})} , & x\in B_{r_e}\backslash B_{r_i}  \\
          {\sum_{n=0}^{\infty} e_n j_n(kr) Y_n(\hat{x}) + d_n h_n^{(1)}(kr) Y_n(\hat{x})} , & x\in B_{R_1}\backslash B_{r_e}. 
         \end{array}
       \right.
\end{equation}
By applying the transmission conditions across $\partial B_{r_i}$ and $\partial B_{r_e}$, we can have from \eqref{eq:e1} that
\begin{equation}\label{eq:transmission_condition_with_core_3d}
  \left\{
    \begin{array}{ll}
     { a_n j_n(\frac{kr_i}{\sqrt{\epsilon_c}})=b_n j_n(k_{1,\delta}r_i) + c_n h_n^{(1)}(k_{1,\delta}r_i) }, \\
     { \sqrt{\epsilon_c}a_n j_n^{\prime}(\frac{kr_i}{\sqrt{\epsilon_c}})=\sqrt{\epsilon_s+i\delta} \left(b_n j_n^{\prime}(k_{1,\delta}r_i) + c_n  h_n^{(1)\prime}(k_{1,\delta}r_i) \right),} \\
    {  b_n j_n(k_{1,\delta}r_e) + c_n h_n^{(1)}(k_{1,\delta}r_e) =  e_n j_n(kr_e)  + d_n h_n^{(1)}(kr_e),} \\
     { \sqrt{\epsilon_s+i\delta} \left(b_n j_n^{\prime}(k_{1,\delta}r_e) + c_n  h_n^{(1)\prime}(k_{1,\delta}r_e) \right)= e_n j_n^{\prime}(kr_e) + d_n  h_n^{(1)\prime}(kr_e).}
    \end{array}
  \right.
\end{equation}
Solving the equations in \eqref{eq:transmission_condition_with_core_3d}, one has that $ a_n={\tilde{a}_n}/{g_n }, b_n={\tilde{b}_n}/{g_n }, c_n={\tilde{c}_n}/{g_n }, d_n={\tilde{d}_n}/{g_n }$, where
\begin{align}
\tilde{a}_n= & e_n \sqrt{\epsilon_s+i\delta}\times \zeta_n \times \left(j_n^{\prime}(k_{1,\delta}r_i) h_n^{(1)}(k_{1,\delta}r_i) - h_n^{(1)\prime}(k_{1,\delta}r_i)j_n(k_{1,\delta}r_i)\right),\\
    \tilde{b}_n= & e_n \times\zeta_n \times \left(\sqrt{\epsilon_c}j_n^{\prime}(\frac{kr_i}{\sqrt{\epsilon_c}}) h_n^{(1)}(k_{1,\delta}r_i) - \sqrt{\epsilon_s+i\delta} h_n^{(1)\prime}(k_{1,\delta}r_i)j_n(\frac{kr_i}{\sqrt{\epsilon_c}})\right),\\
    \tilde{c}_n= & e_n \times \zeta_n \times \left(\sqrt{\epsilon_s+i\delta} j_n^{\prime}(k_{1,\delta}r_i)j_n(\frac{kr_i}{\sqrt{\epsilon_c}}) - \sqrt{\epsilon_c}j_n^{\prime}(\frac{kr_i}{\sqrt{\epsilon_c}}) j_n(k_{1,\delta}r_i) \right),\\
   \zeta_n:= &  j_n^{\prime}(kr_e) h_n^{(1)}(kr_e) - h_n^{(1)\prime}(kr_e)j_n(kr_e),
   \end{align}
 and
 \begin{align}
 \tilde{d}_n= &e_n \left( \sqrt{\epsilon_c} \left(h_n^{(1)}(k_{1,\delta}r_i)j_n(k_{1,\delta}r_e)- h_n^{(1)}(k_{1,\delta}r_e)j_n(k_{1,\delta}r_i)\right) j_n^{\prime}(\frac{kr_i}{\sqrt{\epsilon_c}})j_n^{\prime}(kr_e) + \right.\nonumber \\
    & (\epsilon_s+i\delta)\left( h_n^{(1)\prime}(k_{1,\delta}r_i)j_n^{\prime}(k_{1,\delta}r_e) -h_n^{(1)\prime}(k_{1,\delta}r_e)j_n^{\prime}(k_{1,\delta}r_i) \right) j_n(\frac{kr_i}{\sqrt{\epsilon_c}})j_n(kr_e) + \nonumber\\
    & \sqrt{\epsilon_s+i\delta}\left( j_n^{\prime}(k_{1,\delta}r_i)h_n^{(1)}(k_{1,\delta}r_e) - h_n^{(1)\prime}(k_{1,\delta}r_i)j_n(k_{1,\delta}r_e) \right) j_n(\frac{kr_i}{\sqrt{\epsilon_c}})j_n^{\prime}(kr_e) +\nonumber \\
    & \left. \sqrt{\epsilon_s+i\delta}\sqrt{\epsilon_c} \left( h_n^{(1)\prime}(k_{1,\delta}r_e)j_n(k_{1,\delta}r_i) - j_n^{\prime}(k_{1,\delta}r_e)h_n^{(1)}(k_{1,\delta}r_i)\right) j_n^{\prime}(\frac{kr_i}{\sqrt{\epsilon_c}})j_n(kr_e) \right),
 \end{align}
 \begin{align}
  g_n = & (\epsilon_s+i\delta)\left( h_n^{(1)\prime}(k_{1,\delta}r_e)j_n^{\prime}(k_{1,\delta}r_i) -h_n^{(1)\prime}(k_{1,\delta}r_i)j_n^{\prime}(k_{1,\delta}r_e) \right)h_n^{(1)}(kr_e) j_n(\frac{kr_i}{\sqrt{\epsilon_c}}) +\nonumber  \\
      & \sqrt{\epsilon_s+i\delta}\left(h_n^{(1)\prime}(k_{1,\delta}r_i)j_n(k_{1,\delta}r_e)  -j_n^{\prime}(k_{1,\delta}r_i) h_n^{(1)}(k_{1,\delta}r_e) \right)h_n^{(1)\prime}(kr_e) j_n(\frac{kr_i}{\sqrt{\epsilon_c}}) +\nonumber \\
      & \sqrt{\epsilon_c}\left( h_n^{(1)}(k_{1,\delta}r_e)j_n(k_{1,\delta}r_i) -h_n^{(1)}(k_{1,\delta}r_i)j_n(k_{1,\delta}r_e) \right)j_n^{\prime}(\frac{kr_i}{\sqrt{\epsilon_c}})h_n^{(1)\prime}(kr_e) +\nonumber \\
      &  \sqrt{\epsilon_c}\sqrt{\epsilon_s+i\delta}\left( j_n^{\prime}(k_{1,\delta}r_e) h_n^{(1)}(k_{1,\delta}r_i) - h_n^{(1)\prime}(k_{1,\delta}r_e)j_n(k_{1,\delta}r_i)    \right) j_n^{\prime}(\frac{kr_i}{\sqrt{\epsilon_c}})h_n^{(1)}(kr_e),\label{eq:2.29}
 \end{align}
with $k_{1,\delta}$ given in \eqref{eq:definition_k_1}.

By a similar reasoning to that in \eqref{eq:r1} and \eqref{eq:bn}, one can show that $e_n=\beta_n$. With the above series representation of the solution, we next consider the occurrence of ALR, namely configurations to make both \eqref{eq:res1} and \eqref{eq:res2} fulfilled. To that end, we need to impose a certain constraint on the source $f(x)$ such that the potential $F(x)$ is of the following form
\begin{equation}\label{eq:potential_F_constain}
 F(x)=\sum_{n=N}^{\infty} \beta_n j_n(kr)Y_n(\hat{x}),
\end{equation}
for some sufficiently large $N$ so that when $n\geq N$,  the asymptotic properties in \eqref{eq:asymptotic_j} hold. In the following, in order to simplify the statement, we also need to introduce the following two functions:
\begin{equation*}
\begin{split}
& \varphi_1(n,b_1,b_2,r_1,r_2)=\hat{j}_n(r_1) \hat{h}_n^{(1)}(r_2)  \left( b_1 \check{j}_n^{\prime}(r_1)\left(1+\check{h}_n^{(1)}(r_2)\right)   -b_2\check{h}_n^{(1)\prime}(r_2)\left(1+\check{j}_n(r_1)\right) \right),\\
& \varphi_2(n,b_1,b_2,r_1,r_2)= b_1 j_n^{\prime}(r_1) h_n^{(1)}(r_2) - b_2 h_n^{(1)\prime}(r_2) j_n(r_1) , 
\end{split}
\end{equation*}
where  $\hat{f}_n(t)$, $\check{f}_n(t)$, $\hat{h}_n^{(1)}(t)$ and $\check{h}_n^{(1)}(t)$ are defined in \eqref{eq:asymptotic_j}--\eqref{eq:dd1}. 

 \begin{thm}\label{thm:CALR}
 Consider the Helmholtz system \eqref{eq:helm1}--\eqref{eq:mc1} in $\mathbb{R}^3$ with the Newtonian potential $F$ of the source $f$ satisfying \eqref{eq:potential_F_constain}. Let the plasmon configuration be chosen of the following form,
  \begin{equation}\label{eq:CALR3d1}
  \epsilon_c=(1+1/n_0)^2 , \quad  \epsilon_s=-1-1/n_0 \quad \mbox{and}  \quad \delta= \rho^{n_0},
 \end{equation}
 where $\rho:=r_i/r_e<1$ and $n_0\in\mathbb{N}$ with $n_0\gg 1$. If the plasmon configuration fulfils the following condition,
\begin{equation}\label{eq:k_three}
\begin{split}
&\varphi_1(n_0,\sqrt{\epsilon_c},\tau_{s,\delta},\frac{kr_i}{\sqrt{\epsilon_c}},k_{1,\delta} r_i)\varphi_2(n_0,\tau_{s,\delta},1,k_{1,\delta} r_e,kr_e) + \varphi_1(n_0,\tau_{s,\delta},1,k_{1,\delta} r_e, kr_e)\\
&\times\left(\varphi_2(n_0,\sqrt{\epsilon_c},\tau_{s,\delta},\frac{kr_i}{\sqrt{\epsilon_c}},k_{1,\delta} r_i) - \varphi_1(n_0,\sqrt{\epsilon_c},\tau_{s,\delta},\frac{kr_i}{\sqrt{\epsilon_c}},k_{1,\delta} r_i)   \right)=0,
\end{split}
\end{equation}
where $\tau_{s,\delta}:=\sqrt{\epsilon_s+i\delta}$, then there is a critical radius $r_*:=\sqrt{r_e^3/r_i}$ such that if $f$ lies within this radius, $\mathscr{E}_{\delta}[u]\geq \mu_0^{n_0} n_0$ with $\mu_0>1$, and $u(x)$ remains bounded for $|x|>r_e^2/r_i$; and if $f$ lies outside this radius, $\mathscr{E}_{\delta}[u]$ is bounded by a constant depending only on $f, k$ and $r_e$.  

\end{thm}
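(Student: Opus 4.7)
The plan is to reduce the core-shell case to a scalar asymptotic analysis of the Fourier-Bessel coefficients, mirroring the strategy of Theorem~\ref{thm:main1} but now propagating the transmission conditions across both $\partial B_{r_i}$ and $\partial B_{r_e}$. The $n_0$-th shell coefficients $b_{n_0}, c_{n_0}$ in the expansion \eqref{eq:e1} are $\tilde b_{n_0}/g_{n_0}$ and $\tilde c_{n_0}/g_{n_0}$, while $e_{n_0}=\beta_{n_0}$ by the same reasoning as in \eqref{eq:r1}-\eqref{eq:bn}. Applying Green's identity on the shell (as in \eqref{eq:en1}) gives
\[
\mathscr{E}_\delta[u] = \delta \sum_{n\geq N}\Bigl( |k_{1,\delta}|^2 \int_{r_i}^{r_e}\bigl|b_n j_n(k_{1,\delta}r)+c_n h_n^{(1)}(k_{1,\delta}r)\bigr|^2 r^2\,dr + \text{boundary terms}\Bigr),
\]
so the task reduces to showing that $|b_{n_0}|^2 + |c_{n_0}|^2$ is at least of order $\rho^{-2n_0}$ up to polynomial factors in $n_0$.

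First I would substitute the asymptotics \eqref{eq:asymptotic_j}-\eqref{eq:dd1} into the four-line expression \eqref{eq:2.29} for $g_{n_0}$. With the choices $\epsilon_c=(1+1/n_0)^2$ and $\epsilon_s=-1-1/n_0$, the four leading Bessel/Hankel products organise precisely into the two-term combination of $\varphi_1$'s and $\varphi_2$'s appearing in \eqref{eq:k_three}, multiplied by the common scaling $\hat j_{n_0}(kr_i/\sqrt{\epsilon_c})\,\hat h_{n_0}^{(1)}(k_{1,\delta}r_i)\,\hat h_{n_0}^{(1)}(kr_e)$. Condition \eqref{eq:k_three} kills exactly this leading part, so $|g_{n_0}|$ drops to the sub-leading order, controlled by $\delta=\rho^{n_0}$ together with the $\mathcal{O}(1/n_0)$ correction factors $\check j_{n_0}, \check h_{n_0}^{(1)}$. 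A parallel expansion of the numerators $\tilde b_{n_0}, \tilde c_{n_0}$ shows that their leading $\hat j\,\hat h$-products survive uncancelled, so after taking the ratio one obtains $|b_{n_0}|^2+|c_{n_0}|^2\sim \rho^{-2n_0}$. Plugging this back into the identity above yields the lower bound $\mathscr{E}_\delta[u]\gtrsim \rho^{-n_0}\, n_0\, |\beta_{n_0}|^2$.

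Next I would connect $\beta_{n_0}$ to the source location. Using the multipole expansion of the fundamental solution \eqref{eq:fundamental_solution}, if the closest point of $\mathrm{supp}\,f$ lies at distance $R_f>r_e$ from the origin, then generically $|\beta_{n_0}|\asymp |h_{n_0}^{(1)}(kR_f)|\asymp (2n_0-1)!!/(kR_f)^{n_0+1}$; combined with the previous step this gives $\mathscr{E}_\delta[u]\gtrsim \bigl(r_e^3/(r_i R_f^2)\bigr)^{n_0} n_0$, which exceeds $\mu_0^{n_0}n_0$ with some $\mu_0>1$ exactly when $R_f<r_*:=\sqrt{r_e^3/r_i}$; in the opposite regime the same chain of estimates produces a bounded energy. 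For the uniform bound on $u$ outside, a parallel analysis of $d_{n_0}=\tilde d_{n_0}/g_{n_0}$ should show $|d_{n_0}h_{n_0}^{(1)}(k|x|)|\lesssim (r_e^2/(r_i|x|))^{n_0}$ up to polynomial corrections, which stays bounded as soon as $|x|>r_e^2/r_i$, while the non-resonant modes $n\neq n_0$ contribute a smooth field by direct bounds on the Bessel/Hankel products. The main technical obstacle will be the bookkeeping in the eight-term expansion \eqref{eq:2.29}: one must track the $\check j_n, \check h_n^{(1)}$ correction factors sufficiently carefully to verify that the specific choices \eqref{eq:CALR3d1} produce the exact cancellation encoded in \eqref{eq:k_three}, and simultaneously confirm that $\tilde d_{n_0}$ inherits enough of this cancellation to keep the exterior field bounded despite the smallness of $g_{n_0}$.
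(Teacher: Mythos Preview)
Your plan follows essentially the same route as the paper's proof: substitute the large-$n$ asymptotics \eqref{eq:asymptotic_j}--\eqref{eq:dd1} into the explicit coefficient formulas \eqref{eq:2.29}, extract the size of $g_{n_0},\tilde b_{n_0},\tilde c_{n_0},\tilde d_{n_0}$, feed these into the energy identity, and translate the growth/decay of $\tilde\beta_{n_0}:=\beta_{n_0}/(2n_0+1)!!$ into the dichotomy at $r_*=\sqrt{r_e^3/r_i}$. Two points where your sketch deviates from what the paper actually does are worth flagging.

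First, your description of the mechanism behind \eqref{eq:k_three} is slightly off. The parameter choices \eqref{eq:CALR3d1} already force the leading $\hat j\hat h$-terms in $g_{n_0}$ to cancel (this is the analogue of $\epsilon_s=-1-1/n_0$ in Theorem~\ref{thm:main1}); condition \eqref{eq:k_three} is then imposed to kill the next layer of $\check j,\check h$-corrections, so that what survives is $g_{n_0}\approx \delta^2+\rho^{2n_0}$. With $\delta=\rho^{n_0}$ the paper obtains directly
\[
\mathscr{E}_\delta[u]\ \geq\ \frac{n_0\,\tilde\beta_{n_0}^2\,(kr_e)^{2n_0}\,\delta}{\delta^2+\rho^{2n_0}}\ \geq\ n_0\,\tilde\beta_{n_0}^2\Bigl(\frac{k^2r_e^3}{r_i}\Bigr)^{n_0},
\]
together with $\tilde d_{n_0}\approx \delta\,n_0(kr_e)^{2n_0}\beta_{n_0}/((2n_0+1)!!)^2$, which gives the exterior bound for the resonant mode. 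Your intermediate estimate $\mathscr E_\delta\gtrsim \rho^{-n_0}n_0|\beta_{n_0}|^2$ drops the $(kr_e)^{2n_0}/((2n_0+1)!!)^2$ factors; these are exactly what must be tracked to land on the correct critical radius.

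Second, you treat the modes $n\neq n_0$ as harmless by ``direct bounds'', but in the paper these require their own asymptotics: $g_n\approx (n-n_0)^2/(n^2n_0^2)$ and $\tilde d_n\approx (kr_e)^{2n}\beta_n(n-n_0)/(n_0((2n+1)!!)^2)$ for $n\neq n_0$. These are used both to sum the energy when the source lies outside $r_*$ (giving the claimed uniform bound) and to control $\sum_{n\neq n_0}|d_n h_n^{(1)}(kr)|$ for $|x|>r_e^2/r_i$. The denominators $g_n$ for $n$ near $n_0$ are small, so this step is not entirely routine and should be carried out explicitly.
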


\begin{proof}
For notational convenience of the proof, we set $\tilde\beta_n:=\frac{\beta_n}{(2n+1)!!}$, $n\geq N$. By \eqref{eq:CALR3d1} and \eqref{eq:k_three}, together with the use of \eqref{eq:asymptotic_j} one can derive the following estimates when $n=n_0$,
\begin{equation}\label{eq:coefficient_g_n_0}
  g_{n_0}\approx \delta^2 + \rho^{2n_0},\quad  \tilde{b}_{n_0}\approx i\delta \beta_{n_0},
\end{equation}
\begin{equation}\label{eq:coefficient_c_n_0}
 \tilde{c}_{n_0}\approx \frac{n_0(kr_i)^{2n_0}}{(1\cdot 3 \cdots (2n_0+1))^2} \beta_{n_0},\quad  \tilde{d}_{n_0}\approx \frac{\delta n_0(kr_e)^{2n_0}}{(1\cdot 3 \cdots (2n_0+1))^2} \beta_{n_0},
\end{equation}
and when $n\neq n_0$,
\begin{equation}\label{eq:coefficient_g_n}
  g_n\approx  \frac{(n-n_0)^2}{n^2 n_0^2},\quad  b_n\approx \frac{n-n_0}{nn_0}\beta_n,
\end{equation}
\begin{equation}\label{eq:coefficient_d_n}
  c_n\approx \frac{n(kr_i)^{2n} \beta_n}{(1\cdot 3 \cdots (2n+1))^2},\quad  \tilde{d}_n \approx  \frac{(kr_e)^{2n}\beta_n }{(1\cdot 3 \cdots (2n+1))^2}  \frac{n-n_0}{n_0},
\end{equation}
Noting $\delta=\rho^{n_0}$, from \eqref{eq:coefficient_g_n_0} and \eqref{eq:coefficient_c_n_0} and by direct calculations, one can show that
\begin{equation}\label{eq:energy_estimate_with_core}
   \mathscr{E}_{\delta}[u]\geq\frac{n_0\tilde{\beta}_{n_0}^2 (kr_e)^{2n_0} \delta } {\delta^2+\rho^{2n_0}}\geq n_0\tilde{\beta}_{n_0}^2 \left( \frac{k^2r_e^3}{r_i} \right)^{n_0}.
\end{equation}
Consider first that the source $f(x)$ is supported inside the critical radius $r_*=\sqrt{r_e^3/r_i}$. By \eqref{eq:potential_F_constain} and the asymptotic property of $j_n(t)$ in \eqref{eq:asymptotic_j}, one can verify that there exists $\tau\in\mathbb{R}_+$ such that
\begin{equation}\label{eq:lll2}
 \limsup_{n\rightarrow\infty} (\tilde{\beta}_n)^{1/n}=\sqrt{\frac{r_i}{k^2r_e^3}+\tau}.
\end{equation}
Combining \eqref{eq:lll2} and \eqref{eq:energy_estimate_with_core}, one can obtain that
\[
 \mathscr{E}_{\delta}[u]\geq n_0 \left(\frac{r_i}{k^2r_e^3}+\tau\right)^{n_0} \left( \frac{k^2r_e^3}{r_i} \right)^{n_0}.
\]
Next, we suppose that the source is supported outside the critical radius $r_*$. Then there exists $\eta>0$ such that
\[
 \limsup_{n\rightarrow\infty} (\tilde{\beta}_n)^{1/n}\leq\frac{1}{kr_*+\eta},
\]
and the dissipation energy $\mathscr{E}_{\delta}$ can be estimated as follows
\[
 \begin{split}
   \mathscr{E}_{\delta}[u] & \approx \sum_{n\geq N,n\neq n_0} n^3 \left(\frac{n_0}{n-n_0}\right)^2 (kr_e)^{2n}\tilde{\beta}_n^2 + \frac{n_0\tilde{\beta}_{n_0}^2 (kr_e)^{2n_0} \delta } {\delta^2+\rho^{2n_0}}, \\
     & \leq \sum_{n\geq N,n\neq n_0} n^3 \left(\frac{n_0}{n-n_0}\right)^2 \rho^n +  n_0\tilde{\beta}_{n_0}^2 \left( \frac{k^2r_e^3}{r_i} \right)^{n_0}\leq C, 
 \end{split}
\]
which means that resonance does not occur.

Next we prove the boundedness of the solution $u(x)$ when $|x|>r_e^2/r_i$. From \eqref{eq:coefficient_g_n_0} and \eqref{eq:coefficient_c_n_0}, when $n=n_0$ one has that
\begin{equation}\label{eq:estimate_d_n0}
 \begin{split}
   |d_{n_0}h_{n_0}^{(1)}(kr)| & \leq \frac{|\tilde{\beta}_{n_0}| (kr_e)^{2n_0}n_0}{1\cdot 3 \cdots (2n_0+1)}\frac{\delta}{\delta^2+\rho^{2n_0}}\frac{1\cdot 3 \cdots (2n_0-1)}{(kr)^{n_0+1}} \\
     & \leq |\tilde{\beta}_{n_0}|(kr_e)^{n_0} \left(\frac{r_e^2}{r_i}\right)^{n_0}\frac{1}{r^{n_0}} ,
 \end{split}
\end{equation}
and when $n\neq n_0$, one can obtain from \eqref{eq:coefficient_g_n} and \eqref{eq:coefficient_d_n},
\begin{equation}\label{eq:estimate_d_n}
 \begin{split}
   |d_n h_{n}^{(1)}(kr)| & \leq  \frac{|\tilde{\beta}_n| (kr_e)^{2n}}{1\cdot 3 \cdots (2n+1)}  \frac{n^2n_0}{n-n_0} \frac{1\cdot 3 \cdots (2n-1)}{(kr)^{n+1}},\\
     & \leq |\tilde{\beta}_{n}|(kr_e)^{n} \frac{nn_0}{n-n_0}\frac{r_e^n }{r^n}.
 \end{split}
\end{equation}
Hence from \eqref{eq:estimate_d_n0} and \eqref{eq:estimate_d_n}, one has that
\begin{equation}
  |u(x)-F(x)|\leq \sum_{n\geq N} |\tilde{\beta}_{n}|(kr_e)^{n}\leq C, \quad \mbox{when} \quad |x|\geq r_e^2/r_i.
\end{equation}

The proof is complete.
\end{proof}

\begin{rem}
Similar to Remark~\ref{rem:2.1}, we can numerically verify the equation in \eqref{eq:k_three} yields a nonempty set of parameters. For illustration, we set  $n_0=300, r_i=0.5, r_e=1, \epsilon_c=(1+1/n_0)^2, \epsilon_s=-1-1/n_0$ and $\delta=(r_i/r_e)^{n_0}$, and let $k$ be a free parameter. Fig.~4 plots the quantity in the LHS of \eqref{eq:k_three} against $k$ over an interval. It can be seen that there do exist $k$'s such that \eqref{eq:k_three} holds. 
\end{rem}
\begin{figure}[t]
  \centering
 {\includegraphics[width=5cm]{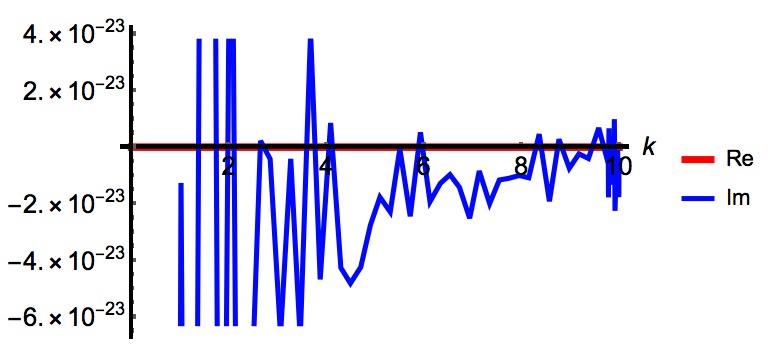}}
  \caption{The real and imaginary parts of the LHS quantity in \eqref{eq:k_three} with respect the change of the wavenumber $k$. }
  \label{fig:4}
\end{figure}

\section{Spectral system of the Neumann-Poinc\'are operator and its application to plasmon resonance in $\mathbb{R}^3$}

With the help of the results obtained in the previous section, we are able to derive the complete spectral syestem of the Neumann-Poinc\'are operator associated with the Helmholtz equation at finite frequencies in the radial geometry. The spectral result is of independent mathematical interest and it can also help us to derive the resonance and ALR results in in Section 2.

\subsection{Spectral system of the Neumann-Poinc\'are operator}

 First we introduce the single layer potential that is defined as
\[
 S_{\partial B_R}^{k}[\varphi](x)=\int_{\partial B_R}G^k(x-y)\varphi(y)ds(y), \quad x\in\mathbb{R}^3,
\]
where $\varphi(x)\in L^2(\partial B_R)$ and $G^k(x)$ is given in \eqref{eq:fundamental_solution}. The conormal derivative of the single layer potential enjoys the following jump formula
\begin{equation}\label{eq:jump_relation}
\frac{\partial}{\partial\nu}S_{\partial B_R}^{k}[\varphi]|_{\pm}(x)=\left(\pm\frac{1}{2}I + \left(K_{\partial B_R}^k\right)^* \right)[\varphi](x), \quad x\in\partial B_{R},
\end{equation}
where
\[
\left(K_{\partial B_R}^k\right)^*[\varphi](x)= \mbox{p.v.}\int_{\partial B_R} \frac{\partial}{\partial\nu_x}G^k(x-y)\varphi(y)ds(y), \quad x\in\partial B_{R},
\]
which is referred to as the Neumann-Poinc\'are operator. In \eqref{eq:jump_relation}, the $\pm$ indicate the limit (to $\partial B_R$) from outside and inside of $B_R$, respectively. It is remarked that $(K^k_{\partial B_R})^*$ is compact but non self-adjoint. In \cite{Ack13}, it is shown for the quasistatic limit with $k=0$, $(K^0_{\partial B_R})^*$ is symmetrizable, but it is not the case with $k\neq 0$. Before giving the eigenvalues of the operator $\left(K_{\partial B_R}^k\right)^*$, we first introduce the following identity.
\begin{lem}
\begin{equation}\label{eq:k_star_identity}
  \left(K_{\partial B_R}^k\right)^*[\varphi](x)=-\frac{1}{2R}S_{\partial B_R}^{k}[\varphi](x)- \frac{ik}{8\pi R}\int_{\partial B_R}e^{ik|x-y|}\varphi(y)ds(y),\quad x\in\partial B_R.
\end{equation}
\end{lem}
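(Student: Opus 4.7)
The plan is to reduce the identity to a pointwise computation at the kernel level, then integrate against $\varphi$. The crucial ingredient is a simple geometric identity on the sphere that converts $\nu_x\cdot(x-y)$ into an expression involving only $|x-y|$, which in turn causes the kernel $\partial_{\nu_x}G^k(x-y)$ to split exactly into a multiple of $G^k(x-y)$ plus a bounded remainder.

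First I would differentiate $G^k(z)=-e^{ik|z|}/(4\pi|z|)$ directly to obtain
\[
\nabla_{z}G^{k}(z)=\frac{z\,(1-ik|z|)\,e^{ik|z|}}{4\pi|z|^{3}},
\]
so that for $x,y\in\partial B_{R}$ with outer unit normal $\nu_{x}=x/R$,
\[
\frac{\partial G^{k}(x-y)}{\partial\nu_{x}}=\frac{\nu_{x}\!\cdot\!(x-y)\,(1-ik|x-y|)\,e^{ik|x-y|}}{4\pi|x-y|^{3}}.
\]
The key step is the observation that on the sphere
\[
|x-y|^{2}=|x|^{2}-2x\!\cdot\!y+|y|^{2}=2R^{2}-2x\!\cdot\!y=2R\,\bigl(\nu_{x}\!\cdot\!(x-y)\bigr),
\]
hence $\nu_{x}\!\cdot\!(x-y)=|x-y|^{2}/(2R)$. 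Substituting this into the previous display cancels two powers of $|x-y|$ and produces
\[
\frac{\partial G^{k}(x-y)}{\partial\nu_{x}}=\frac{(1-ik|x-y|)\,e^{ik|x-y|}}{8\pi R\,|x-y|}=-\frac{1}{2R}G^{k}(x-y)-\frac{ik\,e^{ik|x-y|}}{8\pi R},
\]
which is the claimed identity at the kernel level.

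Multiplying by $\varphi(y)$ and integrating over $\partial B_{R}$ yields the stated formula; the principal-value prescription on the left poses no difficulty because, after the cancellation, the first kernel on the right is only weakly singular (of the same order as the single layer kernel) and the second is bounded, so both integrals converge absolutely for $\varphi\in L^{2}(\partial B_{R})$. The only nontrivial point is the geometric simplification of $\nu_{x}\!\cdot\!(x-y)$ on the sphere, and I do not anticipate any further obstacle; the proof is essentially a one-line computation once this observation is made.
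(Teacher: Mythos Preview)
Your proof is correct and follows essentially the same approach as the paper: both reduce to a pointwise kernel computation using the geometric identity $\nu_x\cdot(x-y)=|x-y|^2/(2R)$ on $\partial B_R$, which the paper writes equivalently as $\langle x-y,\nu_x\rangle/|x-y|^2=1/(2R)$. Your version is slightly more explicit about the gradient of $G^k$ and about why the principal value causes no trouble, but the argument is the same one-line calculation.
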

\begin{proof}
The identify can be easily verified by noting that for $x$ and $y$ both belonging to $\partial B_R$, one has
\[
\begin{split}
   \frac{\partial}{\partial\nu_x}G^k(x-y)= & \frac{-e^{ik|x-y|}}{4\pi}\left(-\frac{\langle x-y,\nu_x\rangle}{|x-y|^3} + ik\frac{\langle x-y,\nu_x\rangle}{|x-y|^2}  \right)\\
    & = \frac{-e^{ik|x-y|}}{4\pi}\left(-\frac{1}{2R|x-y|} + \frac{ik}{2R}  \right),
\end{split}
\]
where the following fact is used
\[
  \frac{\langle x-y,\nu_x\rangle}{|x-y|^2}=\frac{R(1-\langle \nu_y,\nu_x\rangle)}{2R^2(1-\langle \nu_y,\nu_x\rangle)}=\frac{1}{2R}.
\]
\end{proof}

The spectral system of the operator $\left(K_{\partial B_R}^k\right)^*$ is contained in the following theorem.

\begin{thm}\label{thm:eigenvalue_k-star}
Let $k$ and $R$ satisfy the following condition
\begin{equation}\label{eq:assumption_kR}
  j_n(kR)\neq j_{n+2}(kR) \quad \mbox{for} \quad n\geq 0.
\end{equation}
Then
\begin{equation}
  \left(K_{\partial B_R}^k\right)^*[Y_m](x)=\lambda_{m,k,R} Y_m(\hat{x}),
\end{equation}
where
\begin{equation}\label{eq:eigenvalue_lambda_m}
  \lambda_{m,k,R}=\frac{1}{2}-i k^2 R^2 j_n^{\prime}(kR) h_n^{(1)}(kR)=-\frac{1}{2}-i k^2 R^2 j_n(kR) h_n^{(1)\prime}(kR) ,
\end{equation}
and on the boundary $\partial B_R$,
\[
   S_{\partial B_R}^{k}[Y_m](x)=\chi_{m,k,R} Y_m(\hat{x}), \quad x\in \partial B_R,
\]
 where $\chi_{m,k,R}=-i k R^2 h_m^{(1)}(kR) j_m(kR)$.
\end{thm}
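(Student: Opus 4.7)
My plan is to derive both spectral identities from the spherical-harmonic expansion of $G^k$ together with the jump relation \eqref{eq:jump_relation}. Specifically, I would start from the classical addition formula
\begin{equation*}
G^k(x-y) = -ik \sum_{n=0}^{\infty}\sum_{|\ell|\le n} j_n(k r_<)\, h_n^{(1)}(k r_>)\, Y_n^{\ell}(\hat x)\,\overline{Y_n^{\ell}(\hat y)},\qquad r_<=\min(|x|,|y|),\ r_>=\max(|x|,|y|),
\end{equation*}
and substitute it into the definition of $S^k_{\partial B_R}[Y_m](x)$. Using $ds(y)=R^2 d\sigma(\hat y)$ and the $L^2(S^2)$-orthonormality of the $Y_n^{\ell}$ collapses the double sum to a single term in each of the two regions $|x|<R$ and $|x|>R$, giving
\begin{equation*}
S^k_{\partial B_R}[Y_m](x)=\begin{cases} -ikR^2\, j_m(k|x|)\, h_m^{(1)}(kR)\, Y_m(\hat x), & |x|<R,\\[2pt] -ikR^2\, j_m(kR)\, h_m^{(1)}(k|x|)\, Y_m(\hat x), & |x|>R.\end{cases}
\end{equation*}
The continuous boundary trace on $\partial B_R$ is then exactly $\chi_{m,k,R}Y_m(\hat x)$ with $\chi_{m,k,R}=-ikR^2 j_m(kR)\,h_m^{(1)}(kR)$, which is the second assertion.

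For the Neumann--Poincar\'e eigenvalue I would differentiate each branch radially and pass to the one-sided limits $|x|\to R^{\mp}$ to obtain
\begin{equation*}
\frac{\partial S^k_{\partial B_R}[Y_m]}{\partial\nu}\bigg|_{-}= -ik^2 R^2\, j_m'(kR)\, h_m^{(1)}(kR)\, Y_m(\hat x),\quad \frac{\partial S^k_{\partial B_R}[Y_m]}{\partial\nu}\bigg|_{+}= -ik^2 R^2\, j_m(kR)\, h_m^{(1)\prime}(kR)\, Y_m(\hat x).
\end{equation*}
Feeding these into the jump formula \eqref{eq:jump_relation} and solving for $(K^k_{\partial B_R})^*[Y_m]$ from either side produces, respectively, the two expressions for $\lambda_{m,k,R}$ in \eqref{eq:eigenvalue_lambda_m}; in particular $Y_m$ is identified as an eigenfunction. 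The equality of the two expressions is an internal consistency check that reduces to the standard Wronskian identity $j_m(t)h_m^{(1)\prime}(t)-j_m'(t)h_m^{(1)}(t)=i/t^2$ evaluated at $t=kR$.

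The only technical care lies in the very first step: the addition formula must be applied in the strict regions $|x|<R$ and $|x|>R$, where the ordering $r_<, r_>$ is unambiguous, and the value on $\partial B_R$ should be recovered via continuity of the single-layer potential, rather than by naively setting $r_<=r_>=R$ under the series. Everything thereafter---chain-rule differentiation, the jump relation, and the Wronskian---is routine. An alternative route would use the identity \eqref{eq:k_star_identity} directly, reducing $(K^k_{\partial B_R})^*[Y_m]$ to $S^k_{\partial B_R}[Y_m]$ plus $\int_{\partial B_R}e^{ik|x-y|}Y_m(\hat y)\,ds(y)$; the latter integral can be evaluated via a Funk--Hecke/Gegenbauer expansion, but this appears longer than the jump-relation route and leads to the same answer. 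Finally, the hypothesis \eqref{eq:assumption_kR} is not needed to show that each fixed $Y_m$ is an eigenfunction with the stated eigenvalue; it plays its role downstream in ensuring that the family $\{\lambda_{m,k,R}\}_{m\ge 0}$ is non-degenerate, so that the spherical-harmonic decomposition genuinely diagonalizes $(K^k_{\partial B_R})^*$.
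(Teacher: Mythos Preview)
Your argument is correct and in fact cleaner than the one in the paper, but it proceeds along a genuinely different route. The paper does not invoke the addition theorem for $G^k$; instead it combines the explicit kernel identity \eqref{eq:k_star_identity} with the Funk--Hecke formula to reduce $(K^k_{\partial B_R})^*[Y_m]$ to $S^k_{\partial B_R}[Y_m]$ plus a scalar multiple of $Y_m$, and then determines $S^k_{\partial B_R}[Y_m]$ by positing a generic expansion $\sum_n \gamma_{n,k,R}\, j_n(kr)Y_n(\hat x)$ in $B_R$ and solving for the coefficients via the jump relation. It is precisely in this last step that the hypothesis \eqref{eq:assumption_kR} enters: it guarantees $kj_n'(kR)+\tfrac{1}{2R}j_n(kR)\neq 0$, so that \eqref{eq:identigy_1} forces $\gamma_{n,k,R}=0$ for $n\neq m$. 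Your approach sidesteps this entirely, because the addition formula already hands you the single-term expansion in each region; consequently your proof establishes the eigenvalue/eigenfunction identities without any need for \eqref{eq:assumption_kR}. (Your closing remark that \eqref{eq:assumption_kR} serves to separate the eigenvalues is therefore a plausible guess but not what the paper actually uses it for.) The trade-off is that the paper's route yields, as a by-product, a closed form for the Funk--Hecke integral $E_{m,k,R}$ in terms of $j_m,h_m^{(1)}$ and their derivatives, which your shortcut does not produce.
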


\begin{proof}
By the Funk-Hecke formula (cf. \cite{CK}), one has that
\begin{equation}\label{eq:F-H_formula}
 \int_{\partial B_R}e^{ik|x-y|}Y_m(\hat{y})ds(y)=2\pi R^2 E_{m,k,R} Y_m(\hat{x}),
\end{equation}
where
\begin{equation}\label{eq:expression_E_m}
 E_{m,k,R}=\int_{-1}^{1}e^{i\sqrt{2}kR\sqrt{1-t}}P_m(t)dt,
\end{equation}
with $P_m(t)$ denoting the Legendre polynomial of degree $m$. Since $S_{\partial B_R}^{k}[Y_m](x)$ satisfies
\[
 (\triangle + k^2) S_{\partial B_R}^{k}[Y_m](x)=0 \quad x\in B_R,
\]
we can assume that
\begin{equation}\label{eq:Y_m_expasion}
  S_{\partial B_R}^{k}[Y_m](x)=\sum_{n=0}^{\infty} \gamma_{n,k,R} j_n(kr) Y_n(\hat{x}) \quad x\in B_R.
\end{equation}
Substituting \eqref{eq:k_star_identity} and \eqref{eq:Y_m_expasion} into the jump formula \eqref{eq:jump_relation}, together with the help of \eqref{eq:F-H_formula}, one can obtain that
\begin{equation}\label{eq:identigy_1}
  \sum_{n=0}^{\infty} \gamma_{n,k,R}  \left(k j_n^{\prime}(kR) + \frac{j_n(kR)}{2R} \right) Y_n(\hat{x})=\left( -\frac{1}{2}-\frac{ikR}{4}E_m \right)Y_m(\hat{x}).
\end{equation}
From the two properties of the spherical Bessel function $j_n(t)$ that
\begin{equation}
\begin{split}
    & j_{n+1}(t)=-t^n\frac{d}{dt}\left( t^{-n} f_n(t) \right),\quad n=0,1,\cdots,  \\
    & j_{n+1}(t)+j_{n-1}(t)=\frac{2n+1}{t}f_n(t), \quad n=1,2,\cdots,
\end{split}
\end{equation}
and together with the assumption in \eqref{eq:assumption_kR}, one can show that
\begin{equation}
  \left(k j_n^{\prime}(kR) + \frac{j_n(kR)}{2R} \right)\neq 0, \quad \mbox{for}\quad n\geq 0.
\end{equation}
Therefore one can obtain from the equation \eqref{eq:identigy_1} that
\begin{equation}\label{eq:coeff_gamma}
  \gamma_{n,k,R}=
\left\{
  \begin{array}{ll}
    \displaystyle{-\frac{(2+ikRE_m)R}{2(2kRj_n^{\prime}(kR)+ j_n(kR))},} & n=m,\medskip \\
    \ \ \ 0, & n\neq m.
  \end{array}
\right.
\end{equation}
Finally, from the equation \eqref{eq:k_star_identity}, one has that
\[
  \left(K_{\partial B_R}^k\right)^*[Y_m](x)=\lambda_{m,k,R} Y_m(\hat{x}),
\]
where
\[
\lambda_{m,k,R}=-\frac{1}{2R}\gamma_{m,k,R} j_m(kR)-\frac{ikR}{4}E_{m,k,R}.
\]
Furthermore, one can obtain that the eigenvalues for the single layer potential operator on the boundary $\partial B_R$
\begin{equation}\label{eq:eigenvalue_single_inside}
  S_{\partial B_R}^{k}[Y_m](x)=\chi_{m,k,R} Y_m(\hat{x}) \quad x\in \partial B_R,
\end{equation}
where
\[
 \chi_{m,k,R}=\gamma_{m,k,R} j_m(kR),
\]
with $\gamma_{m,k,R}$ given in \eqref{eq:coeff_gamma}.

If using the expression of $S_{\partial B_R}^{k}[Y_m](x)$ outside $B_R$, by the same argument one can obtain that
\[
 \left(K_{\partial B_R}^k\right)^*[Y_m](x)=\lambda_{m,k,R} Y_m(\hat{x}),
\]
where
\[
 \lambda_{m,k,R}=-\frac{1}{2R}\alpha_{m,k,R} h_m^{(1)}(kR)-\frac{ikR}{4}E_{m,k,R},
\]
with
\[
 \alpha_{m,k,R}=\frac{(2-ikRE_m)R}{2\left(2kR h_n^{(1)\prime}(kR)+ h_n^{(1)}(kR)\right)}.
\]
Analogously, the eigenvalues for the single layer potential operator on the boundary $\partial B_R$ is
\begin{equation}\label{eq:eigenvalue_single_outside}
  S_{\partial B_R}^{k}[Y_m](x)=\chi_{m,k,R} Y_m(\hat{x}) \quad x\in \partial B_R,
\end{equation}
where
\[
 \chi_{m,k,R}=\alpha_{m,k,R} h_m^{(1)}(kR).
\]
The eigenvalues in \eqref{eq:eigenvalue_single_inside} and \eqref{eq:eigenvalue_single_outside} should be the same, thus one has that
\begin{equation}\label{eq:identity_eigenvalue}
  \gamma_{m,k,R} j_m(kR)=\alpha_{m,k,R} h_m^{(1)}(kR).
\end{equation}
Substituting the expressions of $\gamma_{m,k,R}$ and $\alpha_{m,k,R}$ into the equation \eqref{eq:identity_eigenvalue} yields that the complex integral expression of $E_{m,k,R}$ in \eqref{eq:expression_E_m} can be written as
\begin{equation}
  E_{m,k,R} = 2kR\left( h_m^{(1)}(kR)j_m^{\prime}(kR) + j_m(kR)h_m^{(1){\prime}}(kR) \right) + 2 j_m(kR)h_m^{(1)}(kR) ,
\end{equation}
which follows from
\[
  j_n(t)h_n^{(1)\prime}(t)-j_n^{\prime}(t)h_n^{(1)}(t)=\frac{i}{t^2}.
\]
Finally, one can show that the eigenvalue for the single layer operator $S_{\partial B_R}^{k}$ can be simplified as
\[
  \chi_{m,k,R}=-i k R^2 h_m^{(1)}(kR) j_m(kR) ,
\]
and the eigenvalue for the operator $\left(K_{\partial B_R}^k\right)^*$ is
\[
 \lambda_{m,k,R}=\frac{1}{2}-i k^2 R^2 j_n^{\prime}(kR) h_n^{(1)}(kR)=-\frac{1}{2}-i k^2 R^2 j_n(kR) h_n^{(1)\prime}(kR) .
\]

This proof is complete.
\end{proof}

\begin{rem}\label{rem:lll1}
We believe the condition \eqref{eq:assumption_kR} should always hold. However, the verification of it is fraught with difficulties, and we include it as a condition. It is also remarked that from \eqref{eq:eigenvalue_lambda_m}, one can verify that the eigenvalues are complex numbers for $k\neq 0$.
\end{rem}

\subsection{Resonance result}
After achieving the spectral system of the Neumann-Poinc\'are operator, we can apply it to derive the resonance result for the Helmholtz system \eqref{eq:general equation_without_core}. Consider \eqref{eq:general equation_without_core} and the solution could be represented as the following integral ansatz, 
\begin{equation}
  u(x)=\left\{
         \begin{array}{ll}
           S_{\partial B_{r_e}}^{k_{1,\delta}}[\varphi](x), & x\in B_{r_e}, \\
           S_{\partial B_{r_e}}^{k}[\psi](x) + F(x), & x\in \mathbb{R}^3\backslash \overline{B_{r_e}},
         \end{array}
       \right.
\end{equation}
with $(\varphi,\psi)\in L^2(\partial B_{r_e})\times L^2(\partial B_{r_e})$ and, $k_{1,\delta}$ and $F(x)$ given in \eqref{eq:potential_F} and \eqref{eq:definition_k_1} respectively.
By using the transmission conditions across $\partial B_{r_e}$, one then has
\begin{equation}\label{eq:solution_N-P}
  \left\{
    \begin{array}{ll}
      S_{\partial B_{r_e}}^{k_{1,\delta}}[\varphi](x) = S_{\partial B_{r_e}}^{k}[\psi](x) + F(x), \\
      (\epsilon_s+i\delta)\frac{\partial}{\partial\nu}\left(S_{\partial B_{r_e}}^{k_{1,\delta}}[\varphi](x)\right) = \frac{\partial}{\partial\nu}\left(S_{\partial B_{r_e}}^{k}[\psi](x) + F(x)\right).
    \end{array}
  \right.
\end{equation}
Substituting the jump relation \eqref{eq:jump_relation} into the last equation \eqref{eq:solution_N-P} yields that
\begin{equation}\label{eq:equation_N-P}
  \left[
    \begin{array}{cc}
      S_{\partial B_{r_e}}^{k_{1,\delta}} & -S_{\partial B_{r_e}}^{k} \\
      \left(\epsilon_s+i\delta\right)\left(-\frac{1}{2}I + \left(K_{\partial B_R}^{k_{1,\delta}}\right)^*\right) & -\left(\frac{1}{2}I+ \left(K_{\partial B_R}^{k}\right)^*\right) \\
    \end{array}
  \right]
\left[
  \begin{array}{c}
    \varphi \\
    \psi \\
  \end{array}
\right]=
\left[
  \begin{array}{c}
    F \\
    \frac{\partial}{\partial{\nu}}F \\
  \end{array}
\right].
\end{equation}
Since the potential $F(x)$ for $x\in\overline{B_{r_e}}$ can be represented as in \eqref{eq:potential_F_expression}, then for $x\in\partial B_{r_e}$, $F(x)$ and $\frac{\partial}{\partial\nu}F(x)$ can be written as
\begin{equation}\label{eq:potential_F_N-P}
 F(x)=\sum_{n=0}^{\infty} \beta_n j_n(kr_e)Y_n(\hat{x}), \quad \frac{\partial}{\partial\nu}F(x)=\sum_{n=0}^{\infty} k\beta_n j_n^{\prime}(kr_e)Y_n(\hat{x}).
\end{equation}
Substituting \eqref{eq:potential_F_N-P} into the equation \eqref{eq:equation_N-P}, one can obtain
\begin{equation}
  \varphi(x)=\sum_{n=0}^{\infty}\hat{\varphi}_n Y_n(\hat{x}),
\end{equation}
where
\begin{equation}\label{eq:expression_varphi_n}
  \hat{\varphi}_n=\frac{k\chi_{n,k,r_e}j_n^{\prime}(kr_e)-(1/2+\lambda_{n,k,r_e})j_n(kr_e)} {(\epsilon_s+i\delta)(-1/2+\lambda_{n,k_1,r_e})\chi_{n,k,r_e}-(1/2+\lambda_{n,k,r_e})\chi_{n,k_1,r_e}}\beta_n.
\end{equation}
Therefore from Green's formula, one has that
\begin{equation*}
  \begin{split}
    \mathscr{E}_{\delta}[u] & = \delta(k_{1,\delta})^2\int_{B_{r_e}}|u|^2 dx + \delta\int_{\partial B_{r_e}}\frac{\partial u}{\partial\nu}\overline{u} ds(x)\\
      & = \delta(k_{1,\delta})^2\int_{B_{r_e}}|S_{\partial B_{r_e}}^{k_{1,\delta}}[\varphi]|^2 dx + \delta\int_{\partial B_{r_e}}\left(-\frac{1}{2} + \left(K_{\partial B_R}^{k_{1,\delta}}\right)^* \right)[\varphi]\overline{S_{\partial B_{r_e}}^{k_{1,\delta}}[\varphi]} ds(x) \\
      & =\sum_{n=0}^{\infty}\delta |\hat{\varphi}_n|^2 \left((k_{1,\delta})^2 |\gamma_{n,k_{1,\delta},r_e}|^2\int_{0}^{r_e}|j_n(k_{1,\delta}r)r|^2dr +(-\frac{1}{2}+\lambda_{n,k_{1,\delta},r_e})\overline{\chi_{n,k_{1,\delta},r_e}}r_e^2   \right).
  \end{split}
\end{equation*}
Thus if we can determine $\epsilon_s+i\delta$ such that for some $n_0\in\mathbb{N}$,
\begin{equation}\label{eq:spectrum_resonance}
  \delta |\hat{\varphi}_{n_0}|^2\rightarrow\infty\quad\mbox{as}\ \ \delta\rightarrow\delta_0, 
\end{equation}
then the resonance occurs. Substituting the expressions of $\lambda_{n,k,r_e}$, $\chi_{n,k_1,r_e}$, $\lambda_{n,k_1,r_e}$ and $\chi_{n,k,r_e}$ given in Theorem \ref{thm:eigenvalue_k-star} into \eqref{eq:expression_varphi_n}, the condition \eqref{eq:spectrum_resonance} can be shown to be reduced to \eqref{eq:fourier_resonance}. Therefore one can derive the same results in Theorem~\ref{thm:main1}. By using the spectral properties of the Nemann-Poinc\'are operator, one can also prove Theorem~\ref{thm:CALR} in a similar manner.

\section{Plasmon resonance and ALR results in two dimensions}

In this section, we extend our 3D resonance and ALR results in Section~2 to the 2D case. The study is pretty much parallel to the 3D case and in what follows, we shall present the main results and then sketch their proofs. 

\subsection{Resonance result with no core in $\mathbb{R}^2$}

We first consider the Helmholtz system \eqref{eq:general equation_without_core} in the two-dimensional case. 
%
Let $J_n(t)$ and $H_n^{(1)}(t)$ be, respectively, the Bessel and Hankel functions of order $n\in\mathbb{N}$. The fundamental solution in \eqref{eq:fundamental_solution} in the 2D case is replaced by $G^{k}(x)=-\frac{i}{4} H_0^{(1)}(k|x|)$. The Newtonian potential  the potential $F(x)$ defined in \eqref{eq:potential_F} has the following series representation for $x\in B_{R_1}$,
\begin{equation}\label{eq:potential_F_expression_two}
 F(x)=\sum_{n=-\infty}^{\infty} \beta_n J_n(kr) e^{in\theta},
\end{equation}
where and also in what follows, we assume that $J_{-n}(t)=J_{n}(t)$. Following a similar argument to that in \eqref{eq:solution_without_core_ge}--\eqref{eq:en1}, one can show that 
\begin{equation}\label{eq:solution_without_core_ge_two}
  u(x)=\left\{
         \begin{array}{ll}
          {\sum_{n=-\infty}^{\infty} a_n J_n(k_{1,\delta}r) e^{in\theta}}, & x\in B_{r_e}, \\
          {\sum_{n=-\infty}^{\infty} b_n J_n(kr) e^{in\theta} + c_n H_n^{(1)}(kr) e^{in\theta}} , & x\in B_{R_1}\backslash \overline{B_{r_e}}.
         \end{array}
       \right.
\end{equation}
with
\begin{equation}\label{eq:solution_without_core_two}
  \left\{
    \begin{array}{ll}
  {  a_n=b_n \frac{J_n^{\prime}(kr_e) H_n^{(1)}(kr_e)- H_n^{(1)\prime}(kr_e)J_n(kr_e)} {\sqrt{\epsilon_s+i\delta} J_n^{\prime}(k_{1,\delta} r_e)H_n^{(1)}(kr_e) - H_n^{(1)\prime}(kr_e)J_n(k_{1,\delta} r_e)  }},\medskip\\
  {  c_n=b_n \frac{J_n^{\prime}(kr_e) J_n(k_{1,\delta} r_e) -\sqrt{\epsilon_s+i\delta}J_n^{\prime}(k_{1,\delta} r_e)J_n(kr_e)}{\sqrt{\epsilon_s+i\delta} J_n^{\prime}(k_{1,\delta} r_e)H_n^{(1)}(kr_e) - H_n^{(1)\prime}(kr_e)J_n(k_{1,\delta} r_e)  },}
    \end{array}
  \right.
\end{equation}
and $b_n=\beta_n$. Moreover, we have that
\begin{equation}
 \begin{split}
   \mathscr{E}_{\delta}[u] & = \delta \left( (k_{1,\delta})^2\int_{B_{r_e}}|u|^2 dx + \int_{\partial B_{r_e}}\frac{\partial u}{\partial\nu}\overline{u} ds(x)\right) \\
     & =\sum_{n=-\infty}^{\infty}2\pi\delta |a_n|^2 \left( (k_{1,\delta})^2\int_{0}^{r_e}|J_n(k_{1,\delta}r)|^2 r dx + k_{1,\delta}r_eJ_n^{\prime}(k_{1,\delta}r_e)\overline{J_n(k_{1,\delta}r_e)} \right),
 \end{split}
\end{equation}
which shows that if there exists $n_0\in\mathbb{N}$ such that
\begin{equation}\label{eq:condition_resonance_two}
 \delta |a_{n_0}|^2\rightarrow \infty\quad\mbox{as}\ \ \delta\rightarrow\delta_0,
\end{equation}
then the plasmon resonance occurs. Next, by using the fact that
 \[
  J_n(t)H_n^{(1)\prime}(t)-J_n^{\prime}(t)H_n^{(1)}(t)=\frac{2i}{\pi t},
\]
one has that
\[
   a_n=\frac{1}{\pi kr_e} \frac{-2i \beta_n} {\sqrt{\epsilon_s+i\delta} J_n^{\prime}(k_{1,\delta} r_e)H_n^{(1)}(kr_e) - H_n^{(1)\prime}(kr_e)J_n(k_{1,\delta} r_e)  }.
\]
Hence, if there exists plasmon parameter $\epsilon_s+i\delta$ such that
\begin{equation}\label{eq:fourier_resonance_two}
  \sqrt{\epsilon_s+i\delta} J_n^{\prime}(k_{1,\delta} r_e)H_n^{(1)}(kr_e) - H_n^{(1)\prime}(kr_e)J_n(k_{1,\delta} r_e)\rightarrow 0,
\end{equation}
as $\delta\rightarrow \delta_0$, then the condition \eqref{eq:condition_resonance_two} is fulfilled. 

\subsubsection{Plasmon resonance in two dimensions}

Similar to our study in the three dimensional case, we first present some simple numerical constructions.

Set
\[
 k=1,\quad \mbox{and} \quad r_e=1,
\]
and for simplicity, we assume that the coefficients of the potential $F(x)$ in \eqref{eq:potential_F_expression_two} are given as
\begin{equation}
  \beta_n=\left\{
            \begin{array}{ll}
              1, & n=n_0, \\
              0, & n\neq n_0,
            \end{array}
          \right.
\end{equation}
The plasmon parameters are chosen as follows
\begin{equation}
  \epsilon_s=\epsilon_{n_0} \quad \mbox{and} \quad \delta=\delta_{n_0},
\end{equation}
which depends on $n_0$. With the help of numerical simulation, the following are the critical values of the resonant plasmon parameters
\begin{equation}\label{eq:data_two}
  \left\{
    \begin{array}{ll}
      \epsilon_{n_0}=-0.422878$ \quad $\delta_{n_0}=0.782261, & \mbox{for}\quad n_0=1, \\
      \epsilon_{n_0}=-0.674330$ \quad $\delta_{n_0}=0.115866, & \mbox{for}\quad n_0=2, \\
      \epsilon_{n_0}=-0.864384$ \quad $\delta_{n_0}=0.006257, & \mbox{for}\quad n_0=3, \\
      \epsilon_{n_0}=-0.931486$ \quad $\delta_{n_0}=0.000143, & \mbox{for}\quad n_0=4.
    \end{array}
  \right.
\end{equation}
Fig.~5 plots the change of the energy $\mathscr{E}_\delta$ against the change of the loss parameter $\delta$ when $\epsilon_{n_0}$ is fixed for $n_0=1,2,3,4$, which show the same behaviours as the three dimensional case.

\begin{figure}
  \centering
 {\includegraphics[width=4cm]{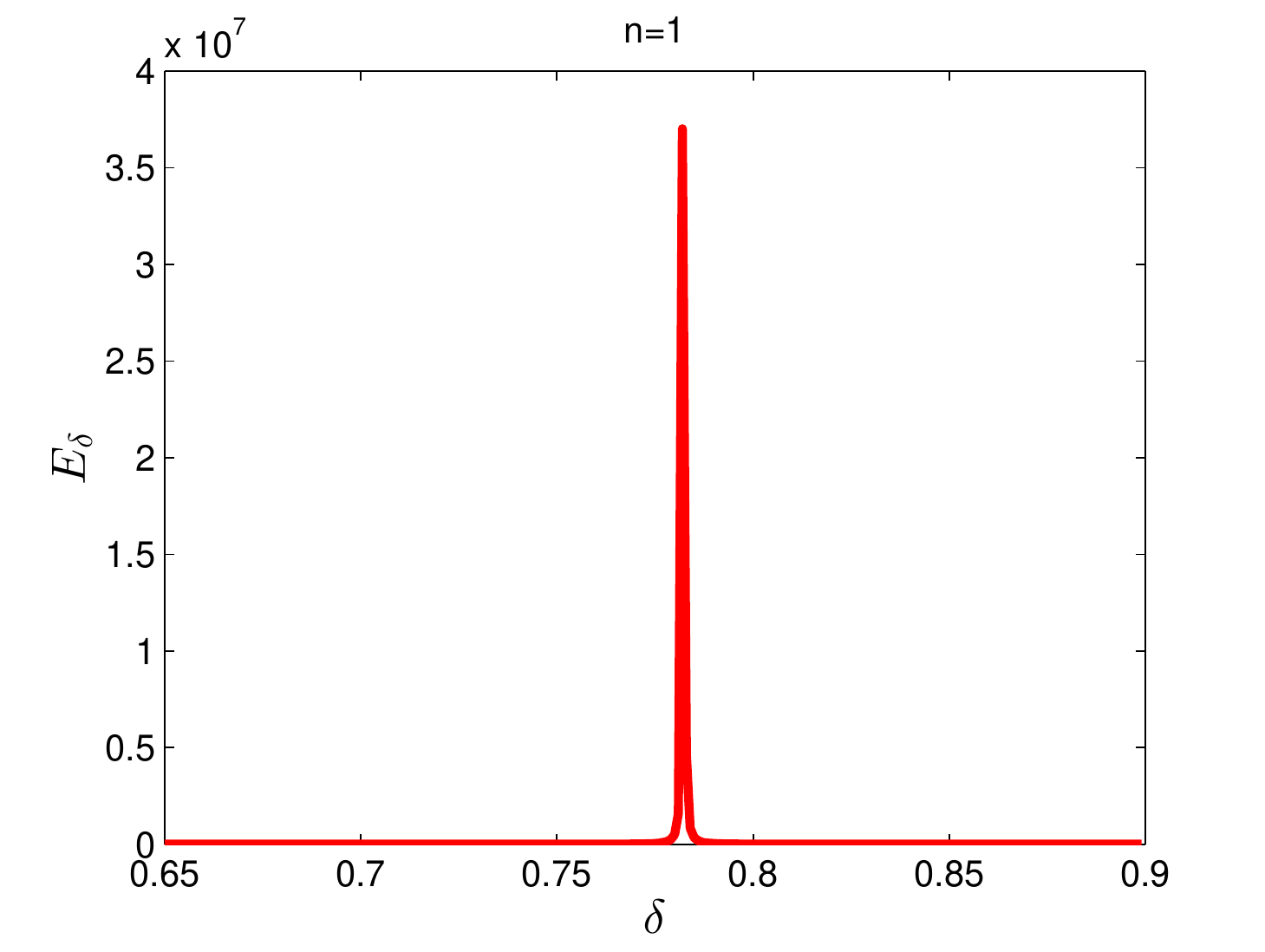}}
 {\includegraphics[width=4cm]{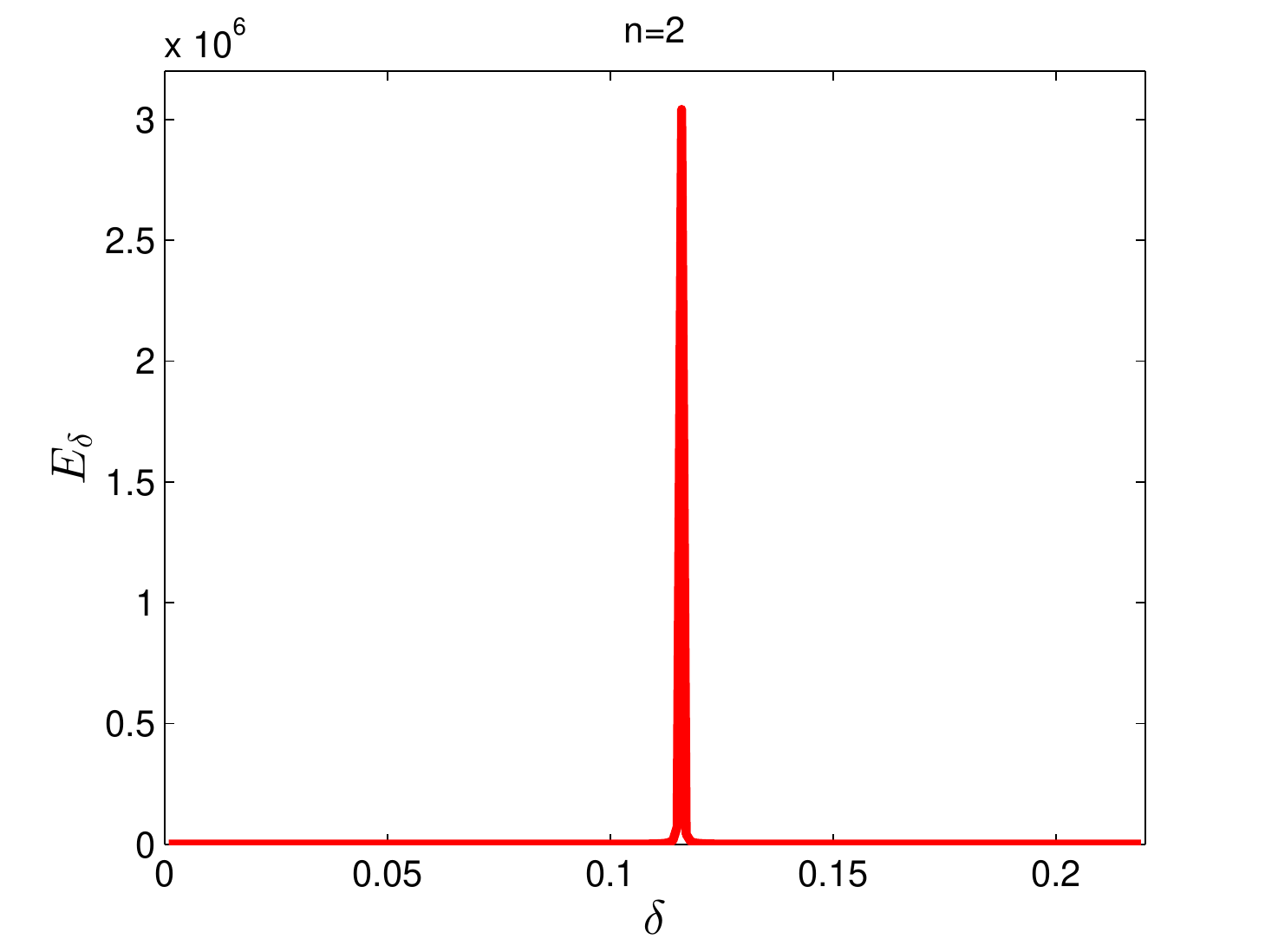}\\}
 {\includegraphics[width=4cm]{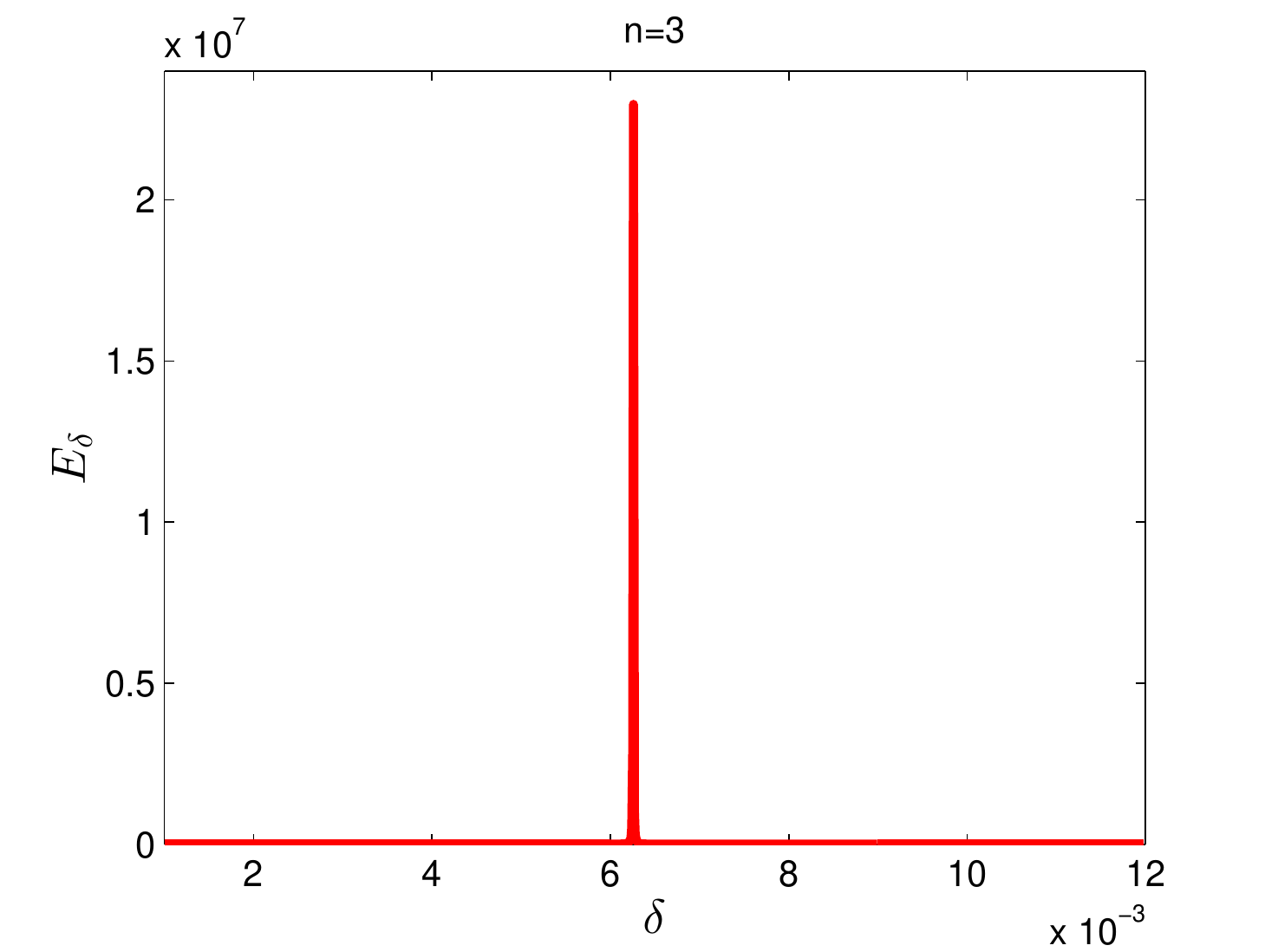}}
 {\includegraphics[width=4cm]{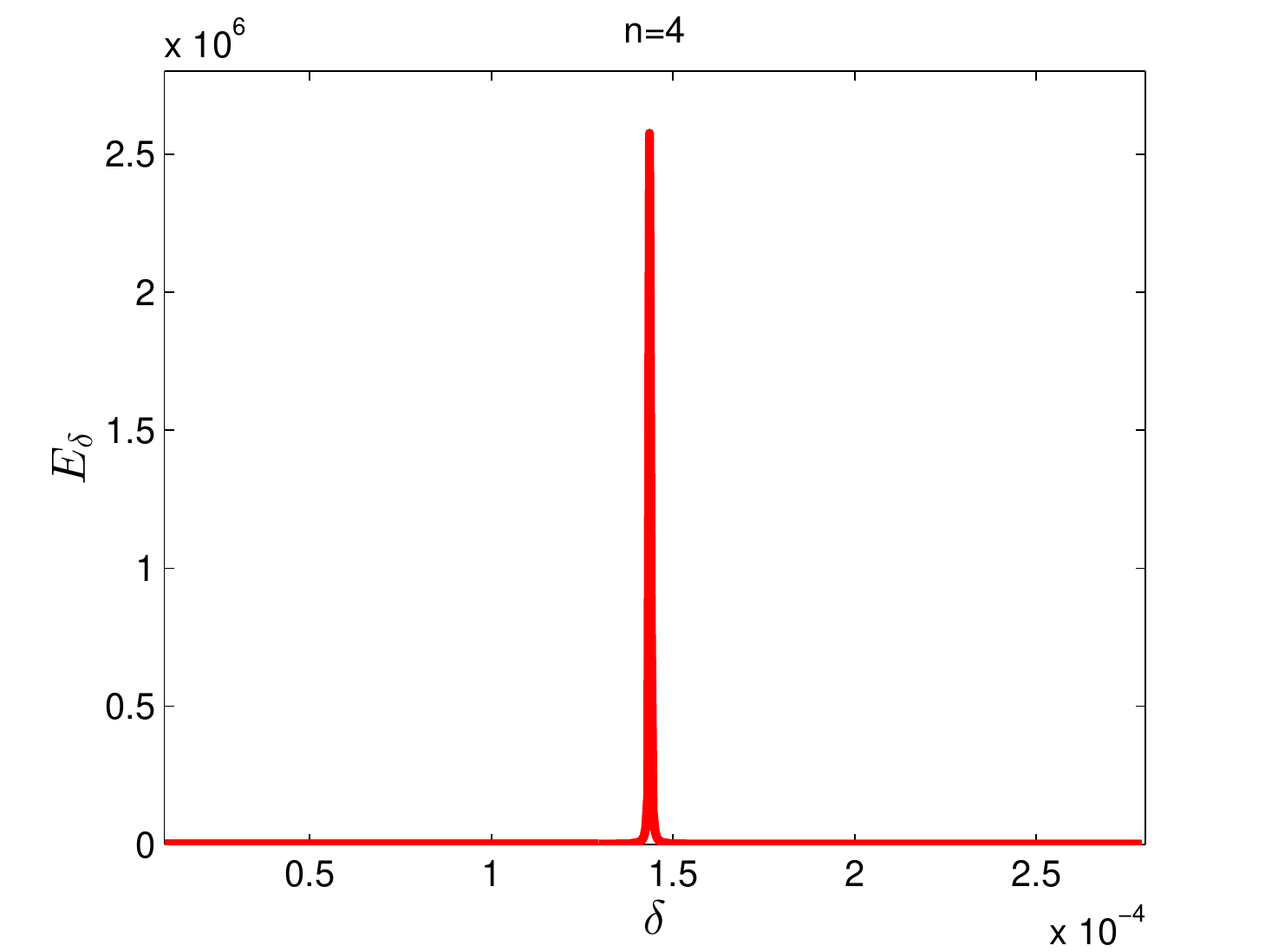}}
  \caption{Resonance}
\end{figure}

Next, we present the construction of a general 2D plasmonic structure that can induce resonance. To that end, we recall the following asymptotic properties of the functions $J_n(t)$ and $H_n^{(1)}(t)$ for sufficiently large $n\in\mathbb{N}$,
\begin{equation}\label{eq:asymptotic_J}
  J_n(t)=\hat{J}_n(t)\left(1+ \check{J}_n(t) \right),\quad  H_n^{(1)}(t)=\hat{H}_n^{(1)}(t) \left(1+\check{H}_n^{(1)}(t) \right),
\end{equation}
where  $\hat{J}_n(t)$, $\check{J}_n(t)$, $\hat{H}_n^{(1)}(t)$ and $\check{H}_n^{(1)}(t)$ have the following properties,
\begin{equation}\label{eq:dd2}
\hat{J}_n(t)=\frac{t^n}{2^n n!}, \quad \check{J}_n(t)=\mathcal{O}\left(\frac{1}{n}\right),
\hat{H}_n^{(1)}(t)=\frac{2^n (n-1)!}{\pi i t^n},  \quad \check{H}_n^{(1)}(t)=\mathcal{O}\left(\frac{1}{n}\right). 
\end{equation}

\begin{thm}\label{thm:main2d1}
Consider the Helmholtz system \eqref{eq:general equation_without_core} in $\mathbb{R}^2$, where the Newtonian potential $F$ of the source $f$ is given in \eqref{eq:potential_F_expression_two}. Suppose that for $F$ in \eqref{eq:potential_F_expression_two}, there exists an index $n_0\in\mathbb{N}$ such that $\beta_{n_0}\neq 0$ and $n_0$ is sufficiently large such that when $n\geq n_0$, $J_n(t)$ and $H_n^{(1)}(t)$ enjoy the asymptotic expansions in \eqref{eq:asymptotic_J}. Let the plasmon parameters be chosen of the following form
\begin{equation}\label{eq:ee1}
\epsilon_s=-1, \quad \delta\in\mathbb{R}_+\quad\mbox{and}\quad \delta\ll 1. 
\end{equation}
Then if the plasmon configuration fulfils the following condition, 
\begin{equation}\label{eq:k_no_two}
\begin{split}
 \hat{J}_{n_0}(k_{1,\delta} r_e) \hat{H}_{n_0}^{(1)}(kr_e)  \left(  \sqrt{\epsilon_s+i\delta}  \check{J}_{n_0}(t)^{\prime}(k_{1,\delta} r_e)\left(1+\check{H}_{n_0}^{(1)}(kr_e)\right)  \right.& \\
               \left. -\check{H}_{n_0}^{(1)\prime}(kr_e)\left(1+\check{J}_{n_0}(k_{1,\delta} r_e)\right)   \right)&=0,   \\
\end{split}	
\end{equation}
one has $\mathscr{E}_\delta[u]\sim\delta^{-1}$. 
\end{thm}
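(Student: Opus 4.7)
The plan is to mirror the three-dimensional argument of Theorem~\ref{thm:main1}, working from the series representations \eqref{eq:solution_without_core_ge_two}--\eqref{eq:solution_without_core_two} and the energy identity for $\mathscr{E}_\delta[u]$ recorded immediately afterwards. The whole analysis reduces to controlling the single denominator
\begin{equation*}
D_{n_0} := \sqrt{\epsilon_s+i\delta}\,J_{n_0}^\prime(k_{1,\delta} r_e) H_{n_0}^{(1)}(kr_e) - H_{n_0}^{(1)\prime}(kr_e) J_{n_0}(k_{1,\delta} r_e),
\end{equation*}
which appears in $a_{n_0}$; once I show $|D_{n_0}|\sim\delta$, the one-mode contribution $\delta|a_{n_0}|^2\sim|\beta_{n_0}|^2/\delta$ already forces $\mathscr{E}_\delta[u]\sim\delta^{-1}$.

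The first step is to insert the asymptotic splittings $J_n=\hat J_n(1+\check J_n)$ and $H_n^{(1)}=\hat H_n^{(1)}(1+\check H_n^{(1)})$ from \eqref{eq:asymptotic_J}--\eqref{eq:dd2} into $D_{n_0}$, expand the products, and regroup systematically. Doing so splits $D_{n_0}$ into two pieces: a principal piece $[\sqrt{\epsilon_s+i\delta}\,\hat J_{n_0}^\prime \hat H_{n_0}^{(1)} - \hat H_{n_0}^{(1)\prime} \hat J_{n_0}](1+\check J_{n_0})(1+\check H_{n_0}^{(1)})$, and a correction piece which is precisely the left-hand side of \eqref{eq:k_no_two}. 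Under hypothesis \eqref{eq:k_no_two} the correction vanishes identically and only the principal piece remains.

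The second step is to evaluate the principal piece explicitly. Using $\hat J_n(t)=t^n/(2^n n!)$, $\hat H_n^{(1)}(t)=2^n(n-1)!/(\pi i t^n)$, and $k_{1,\delta}=k/\sqrt{\epsilon_s+i\delta}$, a direct telescoping gives
\begin{equation*}
\sqrt{\epsilon_s+i\delta}\,\hat J_{n_0}^\prime(k_{1,\delta} r_e) \hat H_{n_0}^{(1)}(kr_e) - \hat H_{n_0}^{(1)\prime}(kr_e) \hat J_{n_0}(k_{1,\delta} r_e) = \frac{1+\epsilon_s+i\delta}{\pi i k r_e (\epsilon_s+i\delta)^{n_0/2}}.
\end{equation*}
The crucial cancellation happens at $\epsilon_s=-1$: the numerator collapses to $i\delta$, the modulus $|(-1+i\delta)^{n_0/2}|$ remains bounded away from $0$ and $\infty$ for $\delta\ll 1$, and the residual factors $(1+\check J_{n_0})(1+\check H_{n_0}^{(1)})=1+\mathcal{O}(1/n_0)$ are harmless. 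This yields $|D_{n_0}|\sim\delta$, hence $|a_{n_0}|\sim|\beta_{n_0}|/\delta$. Inserting this into the energy expansion, isolating the $n=n_0$ mode, and using the $\hat J_{n_0}$ asymptotics to verify that the bracketed Bessel quantity multiplying $2\pi\delta|a_{n_0}|^2$ is positive and of order unity, I obtain the lower bound $\mathscr{E}_\delta[u]\gtrsim \delta^{-1}$. For the matching upper bound one checks that the modes $n\neq n_0$ contribute $\mathcal{O}(1)$, since for them the factor $1+\epsilon_s+i\delta$ in the analog of the displayed identity does not get multiplied by the resonant power $(\epsilon_s+i\delta)^{-n/2}$ with $\epsilon_s=-1$ and the source Fourier coefficients $\beta_n$ are by assumption summable against the corresponding growing weights.

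The main obstacle I foresee is bookkeeping around the fractional power $(\epsilon_s+i\delta)^{n_0/2}$: its principal value at $\epsilon_s=-1$ is the phase $i^{n_0}$, and although this does not affect magnitudes it interacts nontrivially with the branch convention \eqref{eq:definition_k_1}. One must therefore verify that the branch choice is applied consistently throughout the computation of both pieces of $D_{n_0}$, and that the constraint \eqref{eq:k_no_two} is in fact solvable under \eqref{eq:ee1} (the latter is the two-dimensional analog of Remark~\ref{rem:2.1} and can be confirmed by the same kind of numerical sweep). Everything else is a faithful translation of the spherical-Bessel calculation from the proof of Theorem~\ref{thm:main1} to the cylindrical-Bessel setting.
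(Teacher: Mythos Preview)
Your proposal follows essentially the same route as the paper's proof: both reduce to showing that the denominator $D_{n_0}$ is of order $\delta$ via the asymptotic splitting \eqref{eq:asymptotic_J}, then feed $|a_{n_0}|\sim|\beta_{n_0}|/\delta$ into the single-mode energy contribution using Green's formula. Your decomposition of $D_{n_0}$ into the principal piece and the correction piece (the left-hand side of \eqref{eq:k_no_two}) is correct, and your explicit evaluation of the principal piece as $(1+\epsilon_s+i\delta)/[\pi i k r_e(\epsilon_s+i\delta)^{n_0/2}]$ is exactly the ``straightforward calculation'' the paper leaves implicit.

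One caution on your upper-bound sketch: your stated reason that the $n\neq n_0$ modes remain bounded is not quite right. The principal piece for \emph{every} index $n$ carries the same factor $(1+\epsilon_s+i\delta)(\epsilon_s+i\delta)^{-n/2}$, so at $\epsilon_s=-1$ it is $\mathcal{O}(\delta)$ for all $n$, not just $n_0$. What actually keeps $|D_n|$ bounded below for $n\neq n_0$ is that the correction piece (the analogue of \eqref{eq:k_no_two} at index $n$) is \emph{not} assumed to vanish there, and it is generically of size $\mathcal{O}(1/n)$, dominating the $\mathcal{O}(\delta)$ principal term. The paper's own proof, incidentally, only writes out the lower bound $\mathscr{E}_\delta[u]\gtrsim\delta^{-1}$ explicitly and does not spell out the matching upper bound, so this point is not critical for matching the paper's argument.
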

\begin{proof}
By using \eqref{eq:ee1} and \eqref{eq:k_no_two}, together with straightforward calculations, one can show that 
\begin{equation}
  \left|\sqrt{\epsilon_s+i\delta} J_{n_0}^{\prime}(k_{1,\delta}r_e)H_{n_0}^{(1)}(kr_e) - H_{n_0}^{(1)\prime}(kr_e)J_{n_0}(k_{1,\delta}r_e) \right|\approx \delta \left(1+\mathcal{O}\left(\frac{1}{n_0}\right) \right).
\end{equation}
From the solution given in \eqref{eq:solution_without_core_two} and with the help of Green's formula, one has that
\begin{equation}
 \begin{split}
   E_{\delta} & =\delta\int_{B_{r_e}}|\nabla u|^2dx = \delta k_{1,\delta}^2\int_{B_{r_e}}|u|^2dx + \delta\int_{\partial B_{r_e}} \frac{\partial u}{\partial \nu} \overline{u}ds(x)\\
     & \geq \delta k_{1,\delta}^2\int_{B_{r_e}}|a_{n_0} J_{n_0}(k_{1,\delta}r) e^{in_0\theta}|^2dx \\
     & \quad +\delta\int_{\partial B_{r_e}} a_{n_0} k_{1,\delta} J_{n_0}^{\prime}(k_{1,\delta}r_e)  \overline{ \left(a_{n_0}   J_{n_0}(k_{1,\delta}r_e) \right)} |e^{in_0\theta}|^2 ds(x) \\
     & \approx \frac{|\beta_{n_0}|^2}{\delta}\left(1+\mathcal{O}\left(\frac{1}{n_0}\right) \right),
 \end{split}
\end{equation}
which completes the proof by noting that $\beta_{n_0}\neq 0$. 
\end{proof}

\begin{rem}
Similar to Remark 2.1, we can numerically verify that the equation in \eqref{eq:k_no_two} yields a nonempty set of parameters. We set $r_e=1, n_0=300, \epsilon_s=-1$ and $\delta=0.5^{n_0}$, and let $k$ be a free parameter. Fig.~6 plots the quantity in the LHS of \eqref{eq:k_no_two} against $k$. One readily sees that there do exist $k$'s such that \eqref{eq:k_no_two} holds. Hence, resonance occurs with the aforesaid parameters at those $k$'s. One can also fix $k$ and determine the other parameters by solving \eqref{eq:k_no_two}. 
\end{rem}

\begin{figure}[t]
  \centering
 {\includegraphics[width=5cm]{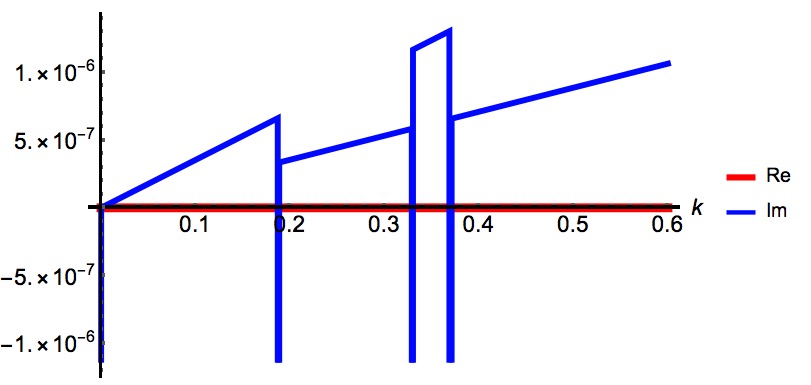}}
  \caption{The real and imaginary parts of the LHS quantity in \eqref{eq:k_no_two} with respect the change of the wavenumber $k$. }
  \label{fig:3}
\end{figure}

\subsection{ALR result with a core in $\mathbb{R}^2$}

In this subsection, we consider the Helmholtz system \eqref{eq:helm1}--\eqref{eq:mc1} in $\mathbb{R}^2$ with $r_i\neq 0$ , and show that ALR can be induced. Let the Newtonian potential of the source $f$ is again given in \eqref{eq:potential_F_expression_two}. By following a similar argument to that in deriving \eqref{eq:e1}--\eqref{eq:2.29}, one can show that the solution to \eqref{eq:helm1}--\eqref{eq:mc1} in $B_{R_1}$ can be expressed as
\begin{equation}
  u(x)=\left\{
         \begin{array}{ll}
          { \sum_{n=-\infty}^{\infty} a_n J_n(\frac{kr}{\sqrt{\epsilon_c}}) e^{in\theta}, }& x\in B_{r_i}, \\
         { \sum_{n=-\infty}^{\infty} b_n J_n(k_{1,\delta}r) e^{in\theta} + c_n H_n^{(1)}(k_{1,\delta}r) e^{in\theta} ,} & x\in B_{r_e}\backslash B_{r_i}  \\
          {\sum_{n=-\infty}^{\infty} e_n J_n(kr) e^{in\theta} + d_n H_n^{(1)}(kr) e^{in\theta} ,} & x\in B_{R_1}\backslash B_{r_e},
         \end{array}
       \right.
\end{equation}
with $a_n={\tilde{a}_n}/{g_n }, b_n={\tilde{b}_n}/{g_n }, c_n={\tilde{c}_n}/{g_n }, d_n={\tilde{d}_n}/{g_n }$, where
\begin{align}
\tilde{a}_n= & e_n \sqrt{\epsilon_s+i\delta}\times \zeta_n \times \left(J_n^{\prime}(k_{1,\delta}r_i) H_n^{(1)}(k_{1,\delta}r_i) - H_n^{(1)\prime}(k_{1,\delta}r_i)J_n(k_{1,\delta}r_i)\right),\\
\tilde{b}_n= & e_n \times \zeta_n\times \left(\sqrt{\epsilon_c}J_n^{\prime}(\frac{kr_i}{\sqrt{\epsilon_c}}) H_n^{(1)}(k_{1,\delta}r_i) - \sqrt{\epsilon_s+i\delta} H_n^{(1)\prime}(k_{1,\delta}r_i)J_n(\frac{kr_i}{\sqrt{\epsilon_c}})\right),  \\
  \tilde{c}_n= & e_n \times \zeta_n\times \left(\sqrt{\epsilon_s+i\delta} J_n^{\prime}(k_{1,\delta}r_i)J_n(\frac{kr_i}{\sqrt{\epsilon_c}}) - \sqrt{\epsilon_c}J_n^{\prime}(\frac{kr_i}{\sqrt{\epsilon_c}}) J_n(k_{1,\delta}r_i) \right),\\
   \zeta_n:=& J_n^{\prime}(kr_e) H_n^{(1)}(kr_e) - H_n^{(1)\prime}(kr_e)J_n(kr_e),
\end{align}
and
\begin{equation}
  \begin{split}
   & \tilde{d}_n= e_n \left( \sqrt{\epsilon_c} \left(H_n^{(1)}(k_{1,\delta}r_i)J_n(k_{1,\delta}r_e)- H_n^{(1)}(k_{1,\delta}r_e)J_n(k_{1,\delta}r_i)\right) j_n^{\prime}(\frac{kr_i}{\sqrt{\epsilon_c}})j_n^{\prime}(kr_e) + \right. \\
    & \tau_{s,\delta}\left( H_n^{(1)\prime}(k_{1,\delta}r_i)J_n^{\prime}(k_{1,\delta}r_e) -H_n^{(1)\prime}(k_{1,\delta}r_e)J_n^{\prime}(k_{1,\delta}r_i) \right) J_n(\frac{kr_i}{\sqrt{\epsilon_c}})J_n(kr_e) + \\
    & \sqrt{\tau_{s,\delta}}\left( J_n^{\prime}(k_{1,\delta}r_i)H_n^{(1)}(k_{1,\delta}r_e) - H_n^{(1)\prime}(k_{1,\delta}r_i)J_n(k_{1,\delta}r_e) \right) J_n(\frac{kr_i}{\sqrt{\epsilon_c}})J_n^{\prime}(kr_e) + \\
    & \left. \sqrt{\tau_{s,\delta}}\sqrt{\epsilon_c} \left( H_n^{(1)\prime}(k_{1,\delta}r_e)J_n(k_{1,\delta}r_i) - J_n^{\prime}(k_{1,\delta}r_e)H_n^{(1)}(k_{1,\delta}r_i)\right) J_n^{\prime}(\frac{kr_i}{\sqrt{\epsilon_c}})J_n(kr_e) \right),
  \end{split}
\end{equation}
and
\begin{equation}
  \begin{split}
   & g_n = \tau_{s,\delta}\left( H_n^{(1)\prime}(k_{1,\delta}r_e)J_n^{\prime}(k_{1,\delta}r_i) -H_n^{(1)\prime}(k_{1,\delta}r_i)J_n^{\prime}(k_{1,\delta}r_e) \right)H_n^{(1)}(kr_e) J_n(\frac{kr_i}{\sqrt{\epsilon_c}}) +  \\
      & \sqrt{\tau_{s,\delta}}\left(H_n^{(1)\prime}(k_{1,\delta}r_i)J_n(k_{1,\delta}r_e)  -J_n^{\prime}(k_{1,\delta}r_i) H_n^{(1)}(k_{1,\delta}r_e) \right)H_n^{(1)\prime}(kr_e) J_n(\frac{kr_i}{\sqrt{\epsilon_c}}) + \\
      & \sqrt{\epsilon_c}\left( H_n^{(1)}(k_{1,\delta}r_e)J_n(k_{1,\delta}r_i) -H_n^{(1)}(k_{1,\delta}r_i)J_n(k_{1,\delta}r_e) \right)J_n^{\prime}(\frac{kr_i}{\sqrt{\epsilon_c}})H_n^{(1)\prime}(kr_e) + \\
      &  \sqrt{\epsilon_c}\sqrt{\tau_{s,\delta}}\left( J_n^{\prime}(k_{1,\delta}r_e) H_n^{(1)}(k_{1,\delta}r_i) - H_n^{(1)\prime}(k_{1,\delta}r_e)J_n(k_{1,\delta}r_i)    \right) J_n^{\prime}(\frac{kr_i}{\sqrt{\epsilon_c}})H_n^{(1)}(kr_e),
  \end{split}
\end{equation}
with $\tau_{s,\delta}:=\epsilon_s+\delta$ and $k_{1,\delta}$ given in \eqref{eq:definition_k_1}. One also has that $e_n=\beta_n$. Similar to \eqref{eq:potential_F_constain}, we impose the constraint on $f$ such that the corresponding Newtonian potential $F(x)$ is of the following form
\begin{equation}\label{eq:potential_F_constain_two}
 F(x)=\sum_{|n|=N}^{\infty} \beta_n J_n(kr)e^{in\theta},
\end{equation}
for some sufficiently large $N\in\mathbb{N}$ such that when $n\geq N$, $J_n(r)$ and $H_n^{(1)}(r)$ enjoy the asymptotic properties given in \eqref{eq:asymptotic_J}. For the subsequent use, we also define the following functions:
\begin{equation*}
\begin{split}
&\widetilde{\varphi}_1(n,b_1,b_2,r_1,r_2):=\hat{J}_n(r_1) \hat{H}_n^{(1)}(r_2)  \left( b_1 \check{J}_n^{\prime}(r_1)\left(1+\check{H}_n^{(1)}(r_2)\right)   -b_2\check{H}_n^{(1)\prime}(r_2)\left(1+\check{J}_n(r_1)\right) \right),\\
&\widetilde{\varphi}_2(n,b_1,b_2,r_1,r_2):= b_1 J_n^{\prime}(r_1) H_n^{(1)}(r_2) - b_2 H_n^{(1)\prime}(r_2) J_n(r_1) , 
\end{split}
\end{equation*}
where  $\hat{J}_n(t)$, $\check{J}_n(t)$, $\hat{H}_n^{(1)}(t)$ and $\check{H}_n^{(1)}(t)$ are given \eqref{eq:asymptotic_J} and \eqref{eq:dd2}.

\begin{thm}\label{thm:CALR_two}
Consider the Helmholtz system \eqref{eq:helm1}--\eqref{eq:mc1} with $r_i\neq 0$ in $\mathbb{R}^2$ with the Newtonian potential $F$ of the source $f$ satisfying \eqref{eq:potential_F_constain_two}. Let the plasmon configuration be chosen of the following form
 \begin{equation}\label{eq:CALR2d1}
   \epsilon_s=-1, \quad  \epsilon_c=1\quad\mbox{and}\quad \delta=\rho^{n_0},
 \end{equation}
 where $\rho:=r_i/r_e<1$ and $n_0\in \mathbb{N}$ with $n_0\gg 1$. If the plasmon configuration fulfils the following condition,
\begin{equation}\label{eq:k_two}
\begin{split}
&\widetilde{\varphi}_1(n_0,\sqrt{\epsilon_c},\tau_{s,\delta},kr_i,k_{1,\delta} r_i)\widetilde{\varphi}_2(n_0,\tau_{s,\delta},1,k_{1,\delta} r_e,kr_e) + \widetilde{\varphi}_1(n_0,\tau_{s,\delta},1,k_{1,\delta} r_e,kr_e)\\
&\times\left(\widetilde{\varphi}_2(n_0,\sqrt{\epsilon_c},\tau_{s,\delta},k r_i,k_{1,\delta} r_i) - \widetilde{\varphi}_1(n_0,\sqrt{\epsilon_c},\tau_{s,\delta},kr_i,k_{1,\delta} r_i)   \right)=0,
\end{split}
\end{equation}
where $\tau_{s,\delta}=\sqrt{\epsilon_s+i\delta}$, then there is a critical radius $r_{*}:=\sqrt{r_e^3/r_i}$ such that if $f$ lies within this radius, $\mathscr{E}_{\delta}[u]\geq \mu_0^{n_0} n_0$ with $\mu_0>1$, and $u(x)$ remains bounded for $|x|>r_e^2/r_i$; and if $f$ lies outside this radius, $\mathscr{E}_{\delta}[u]$ is bounded by a constant depending only on $f, k$ and $r_e$. 
\end{thm}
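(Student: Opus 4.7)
The plan is to mirror the proof of Theorem~\ref{thm:CALR}, replacing the spherical Bessel/Hankel functions and their asymptotics $\hat{j}_n, \hat{h}_n^{(1)}$ with their cylindrical counterparts $\hat{J}_n, \hat{H}_n^{(1)}$ and the double-factorial factors $(2n+1)!!$ with $2^n n!$. First I would substitute the plasmon choices \eqref{eq:CALR2d1} into the explicit expressions for $\tilde{a}_n,\tilde{b}_n,\tilde{c}_n,\tilde{d}_n,g_n$ and expand each Bessel/Hankel evaluation via \eqref{eq:asymptotic_J}--\eqref{eq:dd2}. The hypothesis \eqref{eq:k_two} is precisely engineered so that the leading-order contributions to $g_{n_0}$ cancel, producing the 2D analogues of \eqref{eq:coefficient_g_n_0}--\eqref{eq:coefficient_d_n}, namely
\[
g_{n_0}\approx \delta^2+\rho^{2n_0},\qquad \tilde{b}_{n_0}\approx i\delta\beta_{n_0},
\]
together with $\tilde{c}_{n_0}, \tilde{d}_{n_0}$ of the expected sizes, while for $n\neq n_0$ the choice $\epsilon_s=-1$ does not induce cancellation and one gets $g_n\approx (n-n_0)^2/(nn_0)^2$ with $b_n\approx ((n-n_0)/(nn_0))\beta_n$.

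Second, upon setting $\delta=\rho^{n_0}$, the mode-$n_0$ contribution to the energy dominates and yields, after a Green's-identity computation as in \eqref{eq:energy_estimate_with_core},
\[
\mathscr{E}_\delta[u]\gtrsim \frac{n_0|\tilde\beta_{n_0}|^2 (kr_e)^{2n_0}\delta}{\delta^2+\rho^{2n_0}}\gtrsim n_0|\tilde\beta_{n_0}|^2\Big(\frac{k^2r_e^3}{r_i}\Big)^{n_0},
\]
where $\tilde\beta_n$ denotes the normalised Fourier coefficient appropriate to the 2D asymptotics in \eqref{eq:dd2}. I would then split into two cases using the radius of convergence of the Fourier--Bessel expansion \eqref{eq:potential_F_constain_two}. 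If $\operatorname{supp}(f)\subset B_{r_*}$ with $r_*=\sqrt{r_e^3/r_i}$, one has $\limsup_n |\tilde\beta_n|^{1/n}\geq \sqrt{r_i/(k^2r_e^3)}+\tau$ for some $\tau>0$, which produces the required lower bound $\mathscr{E}_\delta[u]\geq \mu_0^{n_0}n_0$ with $\mu_0>1$. If $\operatorname{supp}(f)$ lies outside $B_{r_*}$, then $\limsup_n|\tilde\beta_n|^{1/n}\leq 1/(kr_*+\eta)$ for some $\eta>0$, and summing over $n$ the energy contributions (controlled by $n^3(n_0/(n-n_0))^2\rho^n$ as in the 3D argument) gives a finite bound depending only on $f,k,r_e$.

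Third, to establish boundedness of $u(x)$ for $|x|>r_e^2/r_i$, I would estimate $|d_n H_n^{(1)}(kr)|$ using $\hat{H}_n^{(1)}(t)\sim 2^n(n-1)!/(\pi i t^n)$. For $n=n_0$, combining $\delta/(\delta^2+\rho^{2n_0})\lesssim \rho^{-n_0}$ with the ratio of factorial factors yields, in analogy with \eqref{eq:estimate_d_n0},
\[
|d_{n_0}H_{n_0}^{(1)}(kr)|\lesssim |\tilde\beta_{n_0}|(kr_e)^{n_0}\Big(\frac{r_e^2}{r_i r}\Big)^{n_0},
\]
which is bounded when $r>r_e^2/r_i$; for $n\neq n_0$ the estimates follow \eqref{eq:estimate_d_n} mutatis mutandis. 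Summing over $n\geq N$ then yields $|u(x)-F(x)|\leq C$ on $\{|x|>r_e^2/r_i\}$.

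The main obstacle will be the careful bookkeeping in verifying that the cancellation encoded in \eqref{eq:k_two} really delivers $g_{n_0}\approx \delta^2+\rho^{2n_0}$ with the correct constants: the 2D Bessel asymptotics differ combinatorially from the 3D spherical ones, and one must track how the mismatched powers in $\hat{J}_n(kr_i)$ versus $\hat{H}_n^{(1)}(kr_e)$ combine through the four terms of $g_n$ to produce exactly the factor $\rho^{n_0}$. Once this bookkeeping is done, the rest of the argument is a direct transcription of the proof of Theorem~\ref{thm:CALR}.
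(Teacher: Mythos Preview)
Your overall strategy---transcribe the proof of Theorem~\ref{thm:CALR} with the cylindrical asymptotics \eqref{eq:asymptotic_J}--\eqref{eq:dd2}---matches the paper's, and your treatment of the mode-$n_0$ energy lower bound and of the boundedness of $u$ for $|x|>r_e^2/r_i$ is essentially what the paper does.

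There is, however, a genuine gap in your handling of the modes $n\neq n_0$. You assert that ``for $n\neq n_0$ the choice $\epsilon_s=-1$ does not induce cancellation and one gets $g_n\approx (n-n_0)^2/(nn_0)^2$.'' This is backwards. In the 3D theorem the plasmon constant $\epsilon_s=-1-1/n_0$ is \emph{mode-dependent}, so the leading-order cancellation in $g_n$ is exact only at $n=n_0$ and off-resonance one picks up the mismatch $(n-n_0)/(nn_0)$. In the 2D theorem the choice $\epsilon_s=-1$ is \emph{mode-independent}; the leading-order cancellation therefore occurs uniformly in $n$, and the paper's estimates \eqref{eq:coefficient_g_n_two} read $g_n\approx \delta^2+\rho^{2n}$, $\tilde b_n\approx i\delta\beta_n$, $\tilde c_n\approx n(kr_i)^{2n}\beta_n/(4^n(n!)^2)$, $\tilde d_n\approx -\delta n(kr_e)^{2n}\beta_n/(4^n(n!)^2)$ for \emph{all} $n\geq N$, not just $n=n_0$. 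Accordingly the full energy takes the form \eqref{eq:energy_estimate_with_core_two}, a sum over all $|n|\geq N$ of $n\tilde\beta_n^2(kr_e)^{2n}\delta/(\delta^2+\rho^{2n})$.

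This matters for the ``source outside $r_*$'' bound: your proposed control via $n^3(n_0/(n-n_0))^2\rho^n$ is based on an estimate for $g_n$ that is simply false here, so that step would not go through as written. You would instead need to bound $\sum_{|n|\geq N} n\tilde\beta_n^2(kr_e)^{2n}\,\delta/(\delta^2+\rho^{2n})$ directly, using $\delta=\rho^{n_0}$ and the decay $|\tilde\beta_n|^{1/n}\leq 1/(kr_*+\eta)$. Once you correct the off-resonance estimates to the uniform ones in \eqref{eq:coefficient_g_n_two}, the rest of your outline (and in particular the boundedness estimate \eqref{eq:estimate_d_n0_two}, which the paper writes for general $n$ rather than separately for $n=n_0$ and $n\neq n_0$) goes through essentially as you describe.
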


\begin{proof}

The proof is similar to that of Theorem~\ref{thm:CALR} in the three dimensional case and in what follows, we mainly point out several major ingredients. Set $\tilde{\beta}_n:=\frac{{\beta}_n}{ 2^n n!}$. One can derive the following estimates when $n\geq N$, 
\begin{equation}\label{eq:coefficient_g_n_two}
  g_{n}\approx \delta^2 + \rho^{2n},\ \  \tilde{b}_{n}\approx i\delta \beta_{n},\ \ 
 \tilde{c}_{n}\approx \frac{n (kr_i)^{2n}}{4^{n}(n!)^2} \beta_{n},\ \  \tilde{d}_{n}\approx -\frac{\delta n (kr_e)^{2n}}{4^n (n!)^2}\beta_{n}.
\end{equation}
For $n_0$ sufficiently large, one can directly estimate with the help of \eqref{eq:coefficient_g_n_two} that 
\begin{equation}\label{eq:energy_estimate_with_core_two}
   \mathscr{E}_{\delta}\approx \sum_{|n|\geq N} \frac{n\beta_n^2 (k r_e)^{2n}}{(2^n n!)^2} \frac{\delta}{\delta^2+\rho^{2n}}\geq n_0\tilde{\beta}_{n_0}^2 \left(\frac{k^2 r_e^3}{r_i} \right)^{n_0}.
\end{equation}
If the source $f$ is supported inside the critical radius $r_*$, by a similar reasoning to \eqref{eq:lll2}, one can show that there exists $\tau\in\mathbb{R}_+$ such that
\begin{equation}\label{eq:lll3}
  \limsup_{n\rightarrow\infty}\left(\tilde{\beta}_n \right)^{1/n}=\sqrt{\frac{r_i}{k^2r_e^3}+\tau}. 
\end{equation}
Combining \eqref{eq:energy_estimate_with_core_two} and \eqref{eq:lll3}, one can then show that $\mathscr{E}[u]\geq \mu_0^{n_0} n_0$ for some $\mu_0>1$ as stated in the theorem. The boundedness of $u(x)$ for $|x|>r_e^2/r_i$ can be shown as follows. By virtue of \eqref{eq:coefficient_g_n_two}, one has that
\begin{equation}\label{eq:estimate_d_n0_two}
   |d_{n}H_{n}^{(1)}(kr)| \leq \frac{n (kr_e)^{2n}}{4^n (n!)^2} \frac{\delta}{\delta^2+\rho^{2n}} |\beta_{n}| |H_{n}^{(1)}(kr)|\leq |\tilde{\beta}_{n}| (kr_e)^n \left(\frac{r_e^2}{r_i}\right)^n \frac{1}{r^n},
\end{equation}
which in turn implies that when $|x|>r_e^2/r_i$,
\[
 |u(x)-F(x)|\leq \sum_{|n|\geq N}|\tilde{\beta}_{n}| (kr_e)^n\leq C.
\]

The proof is complete. 
\end{proof}

\begin{rem}
Similar to Remark 2.2, we can numerically verify that the equation in \eqref{eq:k_two} yields a nonempty set of parameters. For illustration, we set $n_0=300, r_i=0.5, r_e=1, \epsilon_c=1, \epsilon_s=-1$ and $\delta=(r_i/r_e)^{n_0}$, and let $k$ be a free parameter. Fig.~7 plots the quantity in the LHS of \eqref{eq:CALR2d1} against $k$ over an interval. It can be seen that there do exist $k$'s such that  \eqref{eq:CALR2d1} holds. 
\end{rem}

\begin{figure}[t]
  \centering
 {\includegraphics[width=5cm]{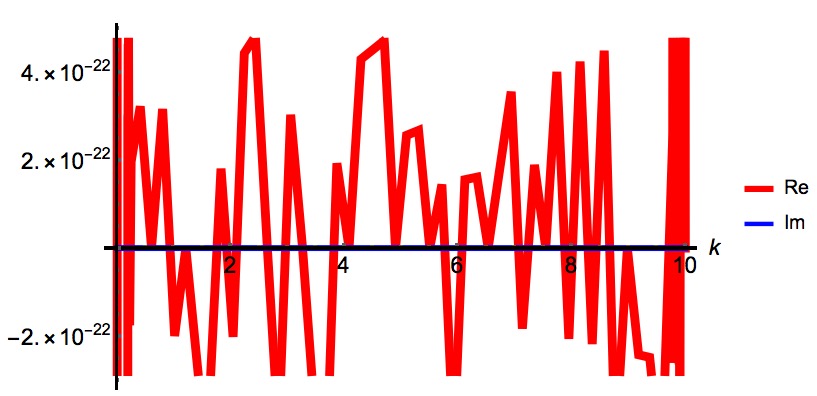}}
  \caption{The real part and imaginary part of left side of  the equation  \eqref{eq:k_two} with respect the change of the frequency $k$. }
  \label{fig:7}
\end{figure}
%
%

\section{Concluding remarks}

We have considered the plasmon resonance and cloaking due to anomalous localized resonance for the Helmholtz system in both two and three dimensions at finite frequencies beyond the quasistatic limits. Several novel constructions were presented which can induce the aforesaid phenomena. The major point of our study is that, if resonance occurs, then all the ingredients of a plasmon configuration form a nonlinear system. By picking the material parameters and external sources of certain specific forms, the above mentioned nonlinear system can be simplified to a verifiable one. The constraint on the sources is critical for the aforementioned simplification. Nevertheless, we would like to point out that it is unclear to us that the constraint is necessary. That is, it may happen that resonance can still be induced for more general sources which do not satisfy the constraint. Moreover, our study is mainly restricted to the radial geometry, and the analysis follows the classical Mie scattering theory method of using spherical wave expansions. Nevertheless, at a certain point, we also develop a spectral argument of using the spectral properties of the Neumann-Poincar\'e operator. The extension to the non-radial geometries is much challenging and is definitely worth future study.

\bigskip
%
%
%
%

\noindent{\bf Funding.}~~The work was supported by the FRG and startup grants from Hong Kong Baptist University, and Hong Kong RGC General Research Funds, 12302415 and 12302017.

\end{document}